\theoremstyle{plain}
\newtheorem{theorem}{Theorem}[section]
\newtheorem{proposition}[theorem]{Proposition}
\theoremstyle{definition}
\newtheorem{remark}[theorem]{Remark}
\newtheorem{example}[theorem]{Example}
\theoremstyle{definition}
\newcommand{\apprxd}{\,{\buildrel d \over \approx}\,}
\newcommand{\equald}{\,{\buildrel d \over =}\,}
\newcommand{\ownTitle}{Generative modeling with low-rank Wasserstein polynomial chaos expansions}
\newcommand{\ownKeywords}{Generative modeling
Tensor train format, Wasserstein metric, polynomial chaos expansion, numerical upscaling, tensor chain, Sinkhorn divergence, optimal transport, multimodal distribution}
\begin{document}

% title page section
%Redefining Keyword line
\def\keywordname{{\bfseries \emph{Key words.}}}%

%Handling Keywords
% \def\mscclassname{{\bfseries \emph{MSC classes}}}%
\def\mscclassname{{\bfseries \emph{AMS subject classifications.}}}%
\def\mscclasses#1{\par\addvspace\medskipamount{\rightskip=0pt plus1cm
\def\and{\ifhmode\unskip\nobreak\fi\ $\cdot$
}\noindent\mscclassname\enspace\ignorespaces#1\par}}
\def\codename{{\bfseries \emph{Code.}}}%
\def\code#1{\par\addvspace\medskipamount{\rightskip=0pt plus1cm\noindent\codename\enspace\ignorespaces\url{#1}\par}}

\title{\ownTitle}

\author{\hspace{1mm}\textcolor{black}{Robert Gruhlke} \\
	Weierstrass Institute\\
	Berlin, Germany \\
	\texttt{r.gruhlke@fu-berlin.de} \\
	\And
Martin Eigel \\
     Weierstrass Institute \\
     Berlin, Germany \\
 	\texttt{eigel@wias-berlin.de} \\
}

\maketitle

\begin{abstract}
A new \textit{Wasserstein multi-element polynomial chaos expansion} (WPCE) is proposed, which is inspired by recent advances in computational optimal transport for estimating Wasserstein distances.
The developed method combines unsupervised learning with the explicit functional representation of a random vector $Y$.
Its training only relies on a finite set of samples from an unknown distribution, which is used to minimize a regularized empirical Wasserstein metric known as debiased Sinkhorn divergence.
An interesting application that motivates the approach comes from the numerical upscaling of non-periodic random fields.
These appear for instance in materials with stochastic micro-structure.
Computations with such data is computationally challenging and requires some form of model reduction.
The WPCE can be used to obtain a macroscopic random material that retains the properties of the microscopic material on a larger scale, i.e. as integrated properties on subdomains.
In contrast to established methods from stochastic homogenization, the WPCE allows to not only represent the constant characterization of the effective material but also contains higher order stochastic information about the effective behavior.

A striking feature of the new method is the generalization of common diffeomorphic transport maps to the case of discontinuous and non-injective model classes $\mathcal{M}$ with possibly different input and output dimension.
It computes a (functional) relation $Y=\mathcal{M}(X)$ in distribution with input random variables $X$ and target $Y$, approximating the given sample distribution.
For the underlying polynomial chaos expansion (PCE) of $\mathcal M$, a stochastic coordinate system $X$ is assumed.
Since the used PCE grows exponentially in the number of random coordinates of $X$, a new low-rank format given as stacks of tensor trains (STT) is introduced.
This alleviates the curse of dimensionality, leading to only linear dependence on the input dimension.

By the choice of the model class $\mathcal{M}$ and the smooth loss function, higher-order optimization schemes and in particular Riemannian optimization become possible.
The proposed approach is illustrated numerically with a high-dimensional upscaling problem, which considers a microscopic random non-periodic composite material.
It results in a computationally tractable effective macroscopic random field in adapted stochastic coordinates.
By using a relaxation to a discontinuous model class, multimodal distributions also become tractable.
\end{abstract}
   \keywords{\ownKeywords}
    \vspace{-1em}
    \mscclasses{15A69 \and 41A30 \and 62J02 \and 65Y20 \and 68Q25}
    \vspace{-1em}

\section{Motivation}
\label{sec:motivation}

Measure transport has become a popular research topic in many scientific fields and is in particular of great interest in Uncertainty Quantification (UQ) and modern (scientific) Machine Learning (ML).
Our contribution is a new strategy to obtain a functional model that approximates the law of the target $Y$ in distribution with values in $\mathbb{R}^N$ for which only samples are available.
This computed model is given explicitly in functional form as expansion in a compressed multi-element polynomial basis~\cite{wan2006multi} in a \textit{stochastic reference coordinate system} determined by a random variable $X$ with values in $\mathbb{R}^M$.
We denote this model class by $\mathcal M=\mathcal{M}(X)$ and optimize the representation by means of \textit{computational optimal transport}~\cite{peyre2019computational} using measure fitting with a \textit{debiased Sinkhorn loss}~\cite{feydy2019interpolating}.
The choice of the reference coordinates $X$, consisting of independent random variables, ideally is adapted to the problem at hand\footnote{However, this interesting topic is not examined in this work and we assume a given reference coordinate system.
An approach in this direction can be found in~\cite{soize2010identification} using a log-loss to determine the needed input dimension and the associated numbers of degree of freedom.} but an ``uninformed'' choice such as standard Gaussian is always possible.

This leads to a polynomial chaos expansion of the form
\begin{equation}
\label{eq:model-intro}
Y\apprxd Y_\text{PCE}:=\mathcal{M}(X)=\sum_{s=1}^{S}\sum_{\alpha\in\mathbb N_0^M}C_s[\alpha]P_\alpha^s(X),
\end{equation}
which approximates the exact but unknown $Y$ (in distribution), where $S\in\mathbb{N}$ is the number of multi-elements and $P_\alpha^s$ denotes the multi-element polynomial chaos indexed by multiindex $\alpha$ on a decomposition of $\mathbb R^M$ into $S$ subdomains.

In case that highly nonlinear maps are required to accurately approximate the transformation of the reference $X$ to the target distribution represented via $Y$, the functional representation easily leads to very high-dimensional representations.
In particular, the complexity scales exponentially (``curse of dimensionality'') in the number of stochastic coordinates $M$, determining the size of the coefficient tensor $C_s$.
To make this approach feasible in practice, it is inevitable to compress the coefficient.
We tackle this challenging task by introducing \textit{stacks of low-rank tensor trains} (STT) format, which is an extension of the popular tensor trains (TT)~\cite{oseledets2011tensor}.
These have been used successfully e.g. in UQ applications~\cite{ENSW19,EMPS20,dolgov2015polynomial,dolgov2019TTdensities}, quantum physics models~\cite{werner2016positive} and quantum chemistry~\cite{szalay2015tensor}.
In fact, the described STT format -- opposite to the standard TT format -- is capable to represent vector valued output.
It leads to a favorable overall complexity that again scales only linearly in $M$.

The model design~\eqref{eq:model-intro} can be seen as a generalization of the (much stricter) notion of optimal transport (OT), which in our method is conceptually carried out on each part of a decomposition of the preimage, mapping to a selection of samples determining the image.
This approach allows for multimodal distribution representations in terms of random variables. 
While OT requires a diffeomorphism between two spaces of equal dimension, this assumption is relaxed in our approach, in particular allowing for non-continuous probability mass transport.
Such a relaxation is inevitable if multimodalities should be represented with high accuracy.
Moreover, we do not require invertible mappings $\mathcal{M}$ and use the notion of convergence in distribution for a generalized functional representation where input dimension $M$ and output dimension $N$ may differ.

Our methodological contribution combines different tools, ranging from optimal transport theory to polynomial chaos representations of random variables.
It can be understood as an alternative model class to generative adversarial neuronal networks (GAN) and its extensions to Wasserstein distances denoted as Wasserstein GANs (WGAN), allowing for Riemannian optimization schemes.
We illustrate the performance of the technique by solving several challenging problems in uncertainty quantification such as the representation of multimodal distributions or random fields defined on different spatial scales, connected by the notion of ``stochastic numerical upscaling''. 
Our contribution is related to different thematic fields, summarized in the following.

\paragraph{\textbf{Optimal transport}}

The notion of optimal transport~\cite{villani2009optimal} allows to compare probability measures in terms of required workload or costs to move one probability mass to another~\cite{kantorovich1942transfer}. 
In the standard setting of the Monge problem~\cite{villani2009optimal}, measureable sets $X$ and $Y$ with respective measures $\mu$ and $\nu$ such that $\mu(X)=\nu(Y)<\infty$ are assumed.
The task is to find an injective transport mapping $T:X\to Y$ subject to some cost function $c:X\times Y\to\mathbb R_+$,
\[ T = {\arg\min}_\tau \left\{\int_X c(x,\tau(x))\mathrm{d}\mu(x) : \tau_\#\mu=\nu \right\},
\]
such that
\[
\int_X g(T(x))\mathrm{d}\mu(x) = \int_Y g(y)\mathrm{d}T_\#\mu(y)
\]
for any $\mu$-measurable $g:X\to\mathbb R$.
Let $Y\sim\mu$ and $X\sim\nu$.
If such a $T$ exists, then 
\begin{equation}
\label{eq:transportmap}
    Y \equald T(X).
\end{equation}
This concept of a transport map has been examined thoroughly in the context of Bayesian inverse problems~\cite{morrison2017beyond,baptista2020adaptive,marzouk2016introduction,brennan2020greedy,detommaso2018stein}.
As a matter of fact, these problems consist of determining a posterior measure given a prior measure conditioned on a set of observations, which resembles the transport problem.
Consequently, the knowledge of the (approximate) transport map can help to significantly alleviate the high computational burden that e.g. is typical for Markov chain Monte Carlo methods.
With this in mind, functional representations in a polynomial basis exploiting the beneficial structure of the Knothe-Rosenblatt transform were for instance developed in~\cite{spantini2018inference,cui2021conditional,cui2021deep}.
For the formulation of the variational problem, the Kullback-Leibler divergence is used. However, while this type of loss functional is appropriate in the setting of Bayesian inference due to the absolute continuity of the prior and posterior measures, the latter property does in general not hold true when optimizing with respect to measures.

Another drawback is that the existence of an injective transport map $T$ is not ensured in general.
A relaxation of this problem was formulated by Kantorovich, avoiding the missing guaranteed existence of a map $T$ in the Monge problem.
This concept is discussed in more detail in the first section of this work, including its computational challenges.
By introducing a model class $\mathcal{M}$ and a reference coordinate system represented by a random variable $X$, we obtain a representation motivated by~\eqref{eq:transportmap} given by
$$
Y \equald \mathcal{M}(X).
$$
This type of representation generalizes the notion of transport in the following way.
First, if $X$ and $Y$ are random vectors with values in $\mathbb{R}^M$ and $\mathbb{R}^N$, $N,M\in\mathbb{N}$, then we allow that $M\neq N$ while in a classical transport formulation $M=N$ has to be satisfied.
Second, we can drop the common injectivity or even diffeomorphism assumptions on $T$.
Moreover, we allow $\mathcal{M}$ to be discontinuous, a feature that enables a larger expressivity, which is essential for multimodal random variables $Y$.

\paragraph{\textbf{Polynomial chaos \& low-rank tensor formats}}
Polynomial chaos (PC) or multi-element PC~\cite{wan2006multi} is ubiquitous in UQ~\cite{le2010spectral,schwab2011sparse,cohen2015approximation} to represent a random variable $Y\colon \Omega\to\mathbb{R}^N$ with finite variance in a coordinate system of $M$ independent random variables $X=(X_1,\ldots,X_M)$.
These are used for an expansion in polynomials orthogonal with respect to given distribution of $X$.
It is an explicit functional representation of the form
\[
Y(\omega) = Y(X(\omega)) = \sum_{\alpha\in\mathbb N_0^M} C[\alpha]P_\alpha(X(\omega)),\quad\omega\in\Omega.
\]
This surrogate includes all statistical information and allows for a computationally efficient evaluation of statistical quantities.
In UQ, it has become decidedly popular for the representation of data and solutions of random differential equations, especially PDEs~\cite{ernst2012convergence,schwab2011sparse,cohen2015approximation}.
Such polynomial chaos surrogates can be obtained by different means, the most common of which are stochastic Galerkin methods and empirical least squares projections, see~\cite{beck2014convergence,blatman2008sparse,le2010spectral,eigel2014adaptive} and~\cite{chkifa2015discrete,migliorati2013approximation,eigel2019variational,eigel2019non}, respectively.
The convergence of the polynomial chaos expansion follows from the classical theory of Cameron and Martin~\cite{ernst2012convergence}.

When $N=1$, low-rank tensor formats allow to compress the high-dimensional representation.
These formats have initially been devised in the quantum chemistry and physics community and have later been reinvented in the field of computational mathematics.
We refer the interested reader to the monograph~\cite{hackbusch2012tensor} and the overview articles~\cite{nouy2015low,bachmayr2016tensor},

The TT format was (re-)introduced in~\cite{oseledets2011tensor} and has become quite popular in computational mathematics, yielding a compressed representation of $C$ for $N=1$ by 
\begin{equation}
\label{eq:classicalTT}
C[\alpha] = C[\alpha_1, \ldots, \alpha_M] =
C_1[\alpha_1]\cdot\ldots \cdot C_m[\alpha_m]
\end{equation}
with order $3$ tensors $C_m\in \mathbb{R}^{r_{m-1},d_m, r_m}$, tensor train rank $r=(r_1,\ldots,r_{m-1})\in\mathbb{N}^{M-1}$, $r_0=r_m=1$ and polynomial degree $d_m\in\mathbb{N}$ in coordinate $m$.
The number of degrees of freedom in this format is bounded by $\mathcal O(Mdr^2)$ with $r=\max\{r_1,\ldots,r_M\}, d=\max\{d_1,\ldots,d_M\}$.
The complexity of this hierarchical representation thus is linear in the stochastic dimensions $M$.
Moreover, these tensors form a differentiable manifold such that e.g. Riemannian optimization methods become feasible~\cite{steinlechner2016riemannian}.
For $N>1$, as mentioned above these formats cannot be used directly and thus require a modified design.

One idea would be to decompose $C = C[i,\alpha]$ with $i=1,\ldots,N$ denoting the $i$-th output component, as in~\eqref{eq:classicalTT} but using forth order tensor $C_1=C_1[i,\alpha_1]$ and third order tensors $C_m=C_m[\alpha_m]$ for $m=2,\ldots,M$.
However, in our application in practice this format results in an exponential growth of ranks with respect to the output dimension $N$.
Alternatively, we extend the standard TT format to the $N\geq 1$ case, leading to a problem-adapted \textit{stack of tensor trains} (STT) format, allowing each component tensor $C_m$ to interact with any output component $i=1,\ldots,N$.
The STT format reads
\begin{equation}
\label{eq:ownformat}
    C[i, \alpha]= C[i, \alpha_1,\ldots,\alpha_M]= C_1[i, \alpha_1]\cdot\ldots \cdot C_M[i, \alpha_M],\qquad i=1,\ldots,N,
\end{equation}
where all component tensors $C_m\in \mathbb{R}^{N, r_{m-1},d_m, r_m}$ are of order 4.
For fixed $i=1,\ldots,N$,~\eqref{eq:ownformat} can be interpreted as a classical tensor train format.  

For the bigger picture, it should be noted that the graph structure of tensor formats in some way resembles the topology (and expressiveness) of neural networks (NN)~\cite{cohen2016expressive,cohen2016convolutional,ali2020approximation,ali2021approximation,bachmayr2021approximation}.
In fact, tensor networks can be seen as a subset of NNs with somewhat lesser expressivity\footnote{mainly because they are a multilinear instead of a nonlinear representation, but for standard regularity spaces they exhibit the same representation complexity} on the one hand, but much richer mathematical structure on the other.
Moreover, since the classical tensor train format of fixed rank forms a Riemannian manifold~\cite{holtz2012manifolds}, the same holds for the Cartesian tensor product of tensor trains as given through the STT format.

\textbf{Relation to Generative Adversarial Neural Networks (GAN)}
A very popular generative model represented by neural networks (NN) is the notion of GANs~\cite{goodfellow2014generative}.
We introduce basic ideas since these methods pursue a similar goal as our approach but differ fundamentally in the way they achieve it.
Classical Wasserstein GANs (WGANs), \textit{e.g.} \cite{liu2019wasserstein} are based on the dual formulation of $\mathcal{W}_1$ optimal transport based on the difference of expectations of $1$-Lipschitz functions given by
$$
\min\limits_{\theta} \max\limits_{f:\operatorname{Lip}(f)\leq 1} \mathbb{E}_{X\sim\mu}[f(X)] - \mathbb{E}_{X\sim\nu[\theta]} [f(X)].
$$
The parametric measure $\nu[\theta]$ is determined by some known distribution $\nu$ represented as NN model $g[\theta]$ subject to parameters $\theta$, cf.~\cite{arjovsky2017wasserstein}.
Accordingly, the following GAN optimization has to be solved, 
\begin{equation}
\label{eq:minmaxformalism}
\min\limits_{\theta} \max\limits_{f:\operatorname{Lip}(f)\leq 1} \mathbb{E}_{X\sim\mu}[f(X)] - \mathbb{E}_{X\sim\nu} [f(g[\theta](X))].
\end{equation}
It has been observed that WGANs are difficult to train, in particular due to exhibited non-robustness.
They may perform unconvincingly, e.g. when multi-modal distributions are approximated as in~\cite{liu2019wasserstein}.
Since classical WGANs with $\ell_1$ cost may not converge, several alternative cost functions were introduced such as the $\ell_2$ cost in~\cite{pinetz2018optimized}.
Smoothed WGANs based on a regularized Sinkhorn loss where introduced in~\cite{genevay2018learning} and applied for the pictures data sets MNIST and CIFAR-10 in~\cite{sanjabi2018convergence}.
Furthermore, the work in~\cite{behrmann2019invertible} considered the construction of invertible residual networks as generative models.
In this construction, the involved Lipschitz constraints to ensure invertibility remain a challenging task in the actual application.

Opposite to the GAN formulation, we do not use the dual characterization and also do not require a discriminator approximation or Lipschitz constraints.
The used explicit polynomial chaos model class corresponds to the \textit{generator} only within the GAN min-max formulation \eqref{eq:minmaxformalism}.
This leads to a single model that needs to be trained.
However, we underline the still present challenge of non-convex and non-linear optimization involving local minima, usually resulting in inaccurate solutions.
The effects already occur in simple scenarios to be discussed in examples in due course in the presentation of our approach.

We rely on second order optimization schemes based on automatic differentiation that become feasible because of the smooth parameter dependence of the debiased Sinkhorn loss.
Additionally, we consider the UQ point of view and interpret this technique as a change of coordinates, which allows for possible stochastic dimensional reduction when $M<N$.

\paragraph{\textbf{Application context}}
This work is inspired considerably by the works~\cite{soize2010identification} and~\cite{sarfaraz2018stochastic} where the derivation of effective random coarse grained fields is examined given only limited fine scale information in terms of samples or observations.
In~\cite{soize2010identification} the construction of a random field representation in terms of a polynomial chaos expansion (PCE) was considered with coefficients defined on a Stiefel manifold associated to inherited correlation structures in the underlying Kosambi–Karhunen–Loève expansions.
In order to make this approach tractable, a simplified loss function -- namely a tensorized approximation of a maximum liklihood estimation -- using one dimensional kernel density estimates to gain statistical information was considered. 
In contrast to this approach, here we consider a PCE obtained from optimizing (approximations of) Wasserstein losses.
We denote this technique \textit{Wasserstein Polynomial Chaos Expansion} (WPCE).
It allows to capture the sample distribution accurately in an unsupervised manner without relying on kernel density estimations that converge slowly and lose statistical information in the estimation process.

The work~\cite{sarfaraz2018stochastic} introduces a stochastic Bayesian upscaling framework to obtain PCEs of random variables.
There, the representation is obtained via a projection in terms of conditional expectations given as spectral Kalman filter.
In its most elaborate form, this approach yields an assimilated random variable that matches the empirical distribution in the first and second moment only.
Nevertheless, it should be noted that extensions to higher moments are mentioned.
The practical realization however would be rather challenging.

A further strong motivation for the presented approach is provided by the applications we have in mind.
First, an efficient generation of samples from a highly nonlinear (i.e. non-Gaussian) target measure is a challenging task, which can be found in many statistical applications.
Getting computational access to (e.g. multimodal) randomness using standard methods such as Markov Chain Monte Carlo is difficult and requires a prior reconstruction of densities from samples, which is elaborate and only exhibits slow convergence.
Our approach directly yields such representations in terms of \textit{functional approximations of random variables}.
The availability of a fully functional representation of the observed randomness exhibits two obvious advantages.
On the one hand, it can be used in subsequent computations as a closed representation of randomness, relying on a known underlying reference coordinate system $X$ and in turn can be used as a generator (or surrogate) to produce more samples of $Y$.
On the other hand, we are interested in the \textit{numerical upscaling} of random non-periodic micro-structures as discussed in Section~\ref{sec:upscaling}.
In contrast to classical (random) homogenization with constant effective coefficient~\cite{blanc2007stochastic,gloria2012optimal,gloria2014optimal}, we strive for an approach where the \textit{macroscopic representation remains a random field}, hence also containing statistical information (of the finer scale) on the coarser scale.
For practical applications, this provides much more useful information since e.g. the variance (or other statistical quantities of interest) of some system response can be determined.
Upscaling of stochastic non-periodic material is carried out in a stochastic pointwise sense, i.e. a sample of the fine scale field is upscaled numerically to obtain a sample of the related coarse scale random field for a prescribed domain decomposition.
Note that the homogenized material and the microscale material both have unknown or intractable distributions.
In the homogenized framework, we allow for stochastic fluctuations without the need to (approximate) a stochastic homogenization problem defined on an unbounded domain.

The structure of this paper is as follows.
The next Section~\ref{sec:computational optimal transport} provides an overview of recent advances in computational optimal transport, building the framework for our approach.
Section~\ref{sec:modelclass} introduces the structure of our model class based on a compressed multi-element PCE, its underlying reference coordinate system $X$ and discusses challenges in the related optimization problem.
Section~\ref{sec:experiments} is devoted to the investigation of the numerical performance of the proposed method in the setting of multi-modal randomness and the representation of random fields on possible different scales in terms of stochastic upscaling.
Eventually, a conclusion in Section~\ref{sec:conclusion} summarizes the main achievements and further directions.

\section{Computational optimal transport}
\label{sec:computational optimal transport}

This section is concerned with the central goal of this work, namely the representation of a random variable $Y$ with image in $\mathbb{R}^N$, which is unknown a priori and only finitely many samples are available or can be generated, typically involving high computational costs.
As a standard tool, \textit{kernel density estimation} (KDE) \cite{chen2017tutorial} is applied on an ensemble of samples to approximate a -- possibly not existing -- density of $Y$.
Once a representation is obtained, further samples of $Y$ can be drawn based on the density information with potentially negligible effort.
However, the KDE suffers from the curse of dimensionality (CoD), \textit{i.e.} the number of samples required for a reasonable approximation of the underlying density grows exponentially in the dimension $N$. 

As an alternative strategy, we aim for a functional representation
\begin{equation}
\label{eq:modelclassrepresentation}
    Y \equald \mathcal{M}(X)
\end{equation}
based on some model class $\mathcal{M}$ and some a priori chosen stochastic coordinate system encoded in a random variable $X$.
We refer to Section~\ref{sec:modelclass} for a detailed discussion.
This representation yields several advantages.
First, it does not require the existence of an underlying density of $Y$.
Second, samples of $Y$ can be drawn with little effort if obtaining samples from $X$ and the propagation through $\mathcal{M}$ can be carried out efficiently.
Finally, this functional representation can be advantageous to compute integral quantities in an analytical (sampling-free) manner.
A common requirement is for instance the evaluation of quantities of interests like moments.
This also is a central ingredient in stochastic Galerkin schemes~\cite{MR1083354,matthies2008stochastic,le2010spectral} for the computation of inner products. 

Given only sample information of $Y$ the Wasserstein distance is a promising tool to compare distributions or their empiricial counterparts even in the case when the measures do not share support.
This property is useful when optimizing over $\mathcal{M}$ based on a random or non-informed start value.
In what follows, we give an overview of results and challenges related to the Wasserstein distance and its computational advances based on regularization that in the end allows to minimize the distance of a model in distribution, which is paramount to our method.

\subsection{Exact optimal transport: Wasserstein distance}
\label{sec:exacttransport}
Since only samples of $Y$ are available, $Y$ and $\mathcal{M}(X)$ can only be compared in terms of samples or equivalently by determining the distance of the related empirical measures defined through the finite samples of $Y$ and $\mathcal{M}(X)$.
We intend to compute the distance of measures with the help of the \textit{Wasserstein} or \textit{Kantorovich–Rubinstein} metric \cite{villani2009optimal}, which are introduced in the following.

Let $(\mathcal{V}, \mathrm{d})$ be a complete metric space, $c\colon \mathcal{V}\times\mathcal{V}\to \mathbb{R}$ a symmetric continuous cost function and let $\mathcal{D}(\mathcal{V})$ denote the set of probability measures on $\mathcal{V}$.
The Kantorovich formulation~\cite{kantorovich1942transfer} of optimal transport costs or Wasserstein costs between probability measures $\mu,\nu \in \mathcal{D}(\mathcal{V})$ is defined as the minimal cost required to move each element of mass of $\mu$ to each element of mass of $\nu$ written as a linear problem over the set of transportation plans, which are probability measures on $\mathcal{V}\times\mathcal{V}$, namely
\begin{align}
   \mathcal{W}(\mu,\nu) := \mathcal{W}_c(\mu,\nu) := \operatorname{inf}\limits_{\pi\in\mathcal{D}(\mathcal{V}\times\mathcal{V})} \left\{ \langle c, \pi \rangle \,:\, \pi_1 = \mu, \pi_2 = \nu\right\}, 
\end{align}
where $\pi_1 = \int\limits_{y\in\mathcal{X}} \mathrm{d}\pi(\cdot,y)$ and $\pi_2 = \int\limits_{x\in\mathcal{X}} \mathrm{d}\pi(x,\cdot)$ are the marginals of the transportation plan $\pi$. 

For $p\in[1,\infty]$ let $c(v_1, v_2) = \mathrm{d}(v_1,v_2)^p$ and $\mathcal{D}_p(\mathcal{V})$ denotes the set of measures on $\mathcal{V}$ with finite moment of order $p$.
Define the $p$-th \textit{Wasserstein distance} by 
\begin{equation}
\label{eq:pWasserstein}
    \mathcal{W}_p(\mu,\nu) := \mathcal{W}_{c^p}(\mu,\nu)^{1/p},\quad \mu,\nu\in\mathcal{D}_p(\mathcal{V}).
\end{equation}
In practice, we consider the measure $\mu$ to be unknown and only $n\in\mathbb{N}$ \textit{iid} samples of $\mu$ or equivalently an empirical measure $\mu_n$ are available.
This raises the question of how well such an empirical measure explains the distribution $\mu$ in terms of the Wasserstein distance~\eqref{eq:pWasserstein}.
Since it metricizes the weak convergence of measures~\cite{villani2009optimal} and the empirical measure converges weakly to $\mu$~\cite{varadarajan1958convergence}, it follows for $p\in[1,\infty)$ that
\begin{equation}
    \mathcal{W}_p(\mu,\mu_n) \to 0,\quad \mu \text{-a.s.  as } n\to\infty, 
\end{equation}
provided that $X$ is compact and separable and $\mu$ is a Borel measure.
Unfortunately, the convergence rate of $\mu_n$ to $\mu$ in Wasserstein distance inevitably suffers from the curse of dimensionality.
A negative result~\cite{dudley1969speed} showed that if $\mu$ is absolutely continuous with respect to the Lebesgue measure on $\mathbb{R}^N$ then for some $C>0$ it holds
$$
\mathbb{E}[\mathcal{W}_1(\mu,\mu_n)] \geq C n^{-1/N}.
$$
An upper bound can be obtained if $N>2$ and $\mu$ are compactly supported on $\mathbb{R}^N$.
Then, 
$$
\mathbb{E}[\mathcal{W}_1(\mu,\mu_n)] \leq C n^{-1/N}.
$$
Similar negative results have been extended to the case $p\in[1,\infty]$ in~\cite{weed2019sharp} based on the concept of upper and lower Wasserstein dimensions of the support of the underlying measure.
In the special case of measures $\mu$ with regular $N$-dimensional support~\cite{mattila1999geometry} that is absolute continuous with respect to a Hausdorff measure, the same upper and lower asymptotical bounds hold true for $\mathbb{E}[\mathcal{W}_p(\mu,\mu_n)]$ at least with $p\in[1,N/2]$.

A much more involved analysis in~\cite{weed2019sharp} shows that the rate of convergence of $\mathcal{W}_p(\mu,\mu_n)$ depends on a notion of the intrinsic dimension of the measure $\mu$, which can be significantly smaller than the dimension of the metric space on which $\mu$ is defined.
Moreover, in the finite-sample regime, wildly different convergence rates may appear.
In particular, they can enjoy much faster convergence for small $n$.
An exemplary phenomenon is based on the different dimensional structure of measures at different scales, i.e. the so called \textit{multi-scale behaviour}.
For illustration, let $\mu$ be $(m, \Delta)$-clusterable, i.e. $\operatorname{supp}(\mu)$ lies in the union of $m$ balls of radius at most $\Delta$.
Then for all $n\leq m(2\Delta)^{-2p}$,
$$
\mathbb{E}[\mathcal{W}_p^p(\mu,\mu_n)]\leq (9^p+3)\sqrt{\frac{m}{n}}.
$$
Notably, $\mu_n$ convergences to $\nu$ as $n^{-1/{2p}}$ in the pre-asymptotic regime.
This result applies for example to mixtures of Gaussian distributions.
If the measure $\mu$ with support in $\mathbb{R}^N$ has approximately low-dimensional support for dimension $\underline{N} < N$ then one obtains $\mathbb{E}[\mathcal{W}_p^p(\mu,\mu_n]\leq C n^{-p/\underline{N}}$ in the finite sample range.
Note that this finite range convergence may be much faster than $n^{-p/N}$ provided $\underline{N}\ll N$.

So far we only discussed bounds on the expectation of $\mathcal{W}_p(\mu,\mu_n)$.
This analysis is motivated by the so called \textit{concentration around expectation} property.
In particular, for $n\in\mathbb{N}$ and $p\in[1,\infty)$ McDiarmid's inequality yields~\cite{weed2019sharp}
$$
\mathbb{P}\left( \mathcal{W}_p^p(\mu,\mu_n) \geq 
\mathbb{E}[\mathcal{W}_p^p(\mu,\mu_n)] + t\right) \leq \exp\left(-2n t^2\right),\quad t\in\mathbb{R}.
$$
We may now consider two random variables $Y\sim\mu$ and $\mathcal{M}(X)\sim \nu$ with values in $(\mathbb{R}^N,\|\cdot\|)$, \textit{iid} samples $Y_1,\ldots,Y_n\sim\mu$, $\mathcal{M}(X_1),\ldots, \mathcal{M}(X_m)$ and denote by $\mu_n$ and $\nu_m$ the empirical measures of $\mu$ and $\nu$, respectively, for given $n,m\in\mathbb{N}$ defined by 
$$
 \mu_n := \sum\limits_{i=1}^n a_i\partial_{Y_i},\quad  \nu_m := \sum\limits_{k=1}^m b_j \partial_{\mathcal{M}(X_j)}.
$$
Here, $\partial_x$ denotes the delta distribution in point $x$ and $a_i\equiv 1/n$ and $b_j\equiv 1/m$. 
Let $p\leq0$ and $C_p = [\|Y_i-\mathcal{M}(X_j)\|^p]_{ij}\in\mathbb{R}^{n,m}$ be the cost matrix and $\bm{1}_n, \bm{1}_m$ be the $n$ or $m$-dimensional vector of ones and define $a=(a_i)$ and $b=(b_j)$.
Then, the Wasserstein distance $\mathcal{W}_p(\mu_n, \nu_m)$ is characterized by
\begin{align}
\label{eq:discreteOT}
    \mathcal{W}_p^p(\mu_n,\nu_n) = \min\limits_{\pi \in \mathcal{U}_{nm}} \langle \pi, C_p\rangle,\quad \mathcal{U}_{nm} = \{ \pi \in \mathbb{R}_+^{n,m}\,:\, \pi \bm{1}_m = \bm{1}_n/m, \pi^T\bm{1}_n = \bm{1}_m/m\}.
\end{align}
This is a classical transport problem with the special case of uniform weights.
It can thus be solved by standard solvers for min-cost flow problems.
Unfortunately, the complexity depends at best cubically on the input data.
Hence, when $n$ or $m$ get large, this approach becomes impractical.
The optimal transport plan $\pi^\ast$ is sparse \cite{dvurechensky2018computational} and lies on the boundary of the feasible region, which is a polytope~\cite{peyre2019computational}.
Consequently, the solution process can become unstable due to ambiguities or discontinuous jumps of the cost functional and its orientation onto the spanned polytope of the admissible region, which is caused by the enforced constraints due to the admissable set $\mathcal{U}_{nm}$.
Moreover, given an optimal transport plan $\pi^\ast$ and a parametric cost function $c=c(\theta)$, which is for example induced by the parameters of the model class $\mathcal{M}$, then $\theta \mapsto \pi^\ast (c(\theta))$ is not differentiable.

\subsection{Inexact optimal transport: Entropic regularization}
\label{sec:inexact optimal transport}

Motivated by the drawbacks of the exact optimal transport presented in Section~\ref{sec:exacttransport}, namely a lack of regularity and intractable computational costs for large sample sizes, a new era to computational optimal transport has started with~\cite{cuturi2013sinkhorn}.
It is based on \textit{entropic regularization} of the Wasserstein distance, which can be traced back to the work of Schrödinger~\cite{schrodinger1932theorie} and reads
\begin{align}
\label{eq:entropicLoss}
\mathcal{W}_{c,\epsilon}(\mu,\nu) := \min\limits_{\pi\in\mathcal{D}(\mathcal{X}\times\mathcal{X})}\langle \pi, c\rangle +   \epsilon
    \operatorname{KL}(\pi,\mu\otimes\nu) \\
    \text{subject to } \quad \pi\geq 0, \quad \pi_1=\mu \quad \pi_2=\nu,
    \end{align}
    for positive values of ``temperature'' $\epsilon > 0 $ with \textit{entropic penalty} (or \textit{Kullback-Leibler divergence})
    \begin{align}
        \operatorname{KL}(\pi,\mu\otimes\nu) := \left\langle \pi, \log\frac{\mathrm{d}\pi}{\mathrm{d}\mu\otimes\nu}\right\rangle - \langle \pi, 1\rangle + \langle \mu\otimes\nu , 1\rangle.
    \end{align}
While the $p$-th Wasserstein distance satisfies the triangle inequality based on the positive definite loss function $c(x,y) = \frac{1}{p}\|x-y\|^p$, those geometric properties do not hold for the entropic loss $\mathcal{W}_{c,\epsilon}$. 
Therefore, minimizing an $\mathcal{W}_{c,\epsilon}$ loss is not a sensible approach~\cite{feydy2020geometric}.
In general, if $\nu$ is a given target measure, there exists a degenerate measure $\mu$ such that $\mathcal{W}_{c,\epsilon}(\mu,\nu) < \mathcal{W}_{c,\epsilon}(\nu,\nu)$.

To circumvent these issues, the \textit{debiased Sinkhorn divergence} was proposed in~ \cite{ramdas2017wasserstein}.
It is given by
    \begin{equation}
    \label{eq:sinkhorndivergence}
        \mathcal{S}_\epsilon(\mu,\nu) := \mathcal{W}_{c,\epsilon}(\mu, \nu) - \frac{1}{2}\left(  \mathcal{W}_{c,\epsilon}(\mu,\mu) +  \mathcal{W}_{c,\epsilon}(\nu,\nu) \right) 
    \end{equation}
and has been adopted in the machine learning community e.g. to obtain generative models~\cite{genevay2018learning}.
As presented in \cite{feydy2020geometric}, the debiased Sinkhorn divergence may metricize the convergence in law and is thus suitable to be applied to the task of measure fitting as we have in mind.
In particular we have the following result due to \cite{feydy2020geometric, feydy2019interpolating}.
\begin{theorem}
\label{theorem:sinkhorn}
Let $\mathcal{V}$ be a compact metric space with a Lipschitz cost function $c\colon\mathcal{V}\times\mathcal{V}\to\mathbb{R}_+$ that induces a positive universal kernel $k_\epsilon(\cdot,\cdot)=\exp(-c(\cdot,\cdot)/\epsilon)$ for $\epsilon > 0$.
Then, $\mathcal{S}_\epsilon$ defines a \textbf{symmetric, positive, definite, smooth} loss function that is \textbf{convex} in each of its input variables.
It also metricizes the convergence in law: for all probability measures $\mu,\nu \in \mathcal{D}(\mathcal{V})$,
\begin{align}
    0 = \mathcal{S}_\epsilon(\nu,\nu) \leq \mathcal{S}_{\epsilon}(\mu, \nu),\\
    \mu = \nu \Leftrightarrow \mathcal{S}_\epsilon(\mu,\nu) = 0 ,\\
    \mu_n\rightharpoonup \mu \Leftrightarrow \mathcal{S}_\epsilon(\mu_n,\mu) \to 0.
\end{align}
The results extend to measures with bounded support on an Euclidian space $\mathcal{V}=\mathbb{R}^d$ endowed with ground cost functions $c(x,y)=\|x-y\|$ or $c(x,y)=\frac{1}{2}\|x-y\|^2$, which induce exponential and Gaussian kernels, respectively.
\end{theorem}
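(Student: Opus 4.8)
The statement is quoted from \cite{feydy2020geometric,feydy2019interpolating}, and the plan is to reconstruct its proof from the duality theory of entropic optimal transport. The first step is to pass to the dual of~\eqref{eq:entropicLoss}: by Fenchel--Rockafellar duality, for $\mu,\nu\in\mathcal{D}(\mathcal{V})$ one has
\begin{equation*}
  \mathcal{W}_{c,\epsilon}(\mu,\nu) = \max_{f,g\in C(\mathcal{V})}\; \langle f,\mu\rangle + \langle g,\nu\rangle - \epsilon\big\langle e^{(f\oplus g-c)/\epsilon},\,\mu\otimes\nu\big\rangle + \epsilon,
\end{equation*}
with $(f\oplus g)(x,y)=f(x)+g(y)$, and one shows --- using compactness of $\mathcal{V}$ and the Lipschitz bound on $c$, which make the candidate potentials uniformly bounded and equicontinuous so that Arzel\`a--Ascoli applies --- that a maximiser $(f,g)$, the \emph{Sinkhorn potentials}, exists and is characterised by the soft $c$-transform fixed-point equations $e^{-f(x)/\epsilon}=\int e^{(g(y)-c(x,y))/\epsilon}\,\mathrm{d}\nu(y)$ and $e^{-g(y)/\epsilon}=\int e^{(f(x)-c(x,y))/\epsilon}\,\mathrm{d}\mu(x)$. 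Feeding these back into the dual objective gives $\langle e^{(f\oplus g-c)/\epsilon},\mu\otimes\nu\rangle=1$, hence $\mathcal{W}_{c,\epsilon}(\mu,\nu)=\langle f,\mu\rangle+\langle g,\nu\rangle$; for the autocorrelation term symmetry forces $f=g=p_\mu$, giving $\mathcal{W}_{c,\epsilon}(\mu,\mu)=2\langle p_\mu,\mu\rangle$. Writing $p=p_\mu$ and $q=p_\nu$ this produces the working identity $\mathcal{S}_\epsilon(\mu,\nu)=\langle f-p,\mu\rangle+\langle g-q,\nu\rangle$, from which everything else is read off.

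The non-negativity and the diagonal properties are the easy part. That $\mathcal{S}_\epsilon(\nu,\nu)=0$ is immediate from~\eqref{eq:sinkhorndivergence}. For the lower bound, insert the \emph{suboptimal} pair $(p,q)$ into the dual of $\mathcal{W}_{c,\epsilon}(\mu,\nu)$; writing $u:=e^{p/\epsilon}>0$, $v:=e^{q/\epsilon}>0$ and $B_\epsilon(\rho,\sigma):=\iint k_\epsilon\,\mathrm{d}\rho\,\mathrm{d}\sigma$, the symmetric fixed-point equations yield the normalisations $B_\epsilon(u\mu,u\mu)=B_\epsilon(v\nu,v\nu)=1$, so
\begin{equation*}
  \mathcal{S}_\epsilon(\mu,\nu)\;\ge\;\epsilon\big(1-B_\epsilon(u\mu,v\nu)\big)\;=\;\tfrac{\epsilon}{2}\,\|u\mu-v\nu\|_{k_\epsilon}^2\;\ge\;0,
\end{equation*}
because $k_\epsilon$ is a positive kernel and therefore $B_\epsilon$ is a positive-semidefinite bilinear form on signed measures. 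If the left-hand side vanishes then $\|u\mu-v\nu\|_{k_\epsilon}=0$, and since $k_\epsilon$ is universal, hence characteristic, this forces $u\mu=v\nu$; substituting this back into the symmetric fixed-point equations for $p$ and $q$ forces $u\equiv v$ and thus $\mu=\nu$.

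For convexity one again exploits the dual. Fixing $\nu$, every summand in the dual representation of $\mathcal{W}_{c,\epsilon}(\cdot,\nu)$ is \emph{affine} in $\mu$ --- in particular $\mu\mapsto\epsilon\langle e^{(f\oplus g-c)/\epsilon},\mu\otimes\nu\rangle$ is linear --- so $\mu\mapsto\mathcal{W}_{c,\epsilon}(\mu,\nu)$ is a supremum of affine functionals and thus convex; it then remains to show that $\mu\mapsto\mathcal{W}_{c,\epsilon}(\mu,\mu)$ is concave, so that $-\tfrac12\mathcal{W}_{c,\epsilon}(\mu,\mu)$ is convex. By the envelope theorem $p_\mu$ is the first variation of $\tfrac12\mathcal{W}_{c,\epsilon}$ along the diagonal, so concavity amounts to monotonicity of $\mu\mapsto-p_\mu$; differentiating the symmetric fixed-point equation --- legitimate because $\epsilon>0$ makes the linearised Sinkhorn operator a strict contraction, hence invertible on the relevant subspace --- reduces this to positive-definiteness of the rescaled kernel $(x,y)\mapsto u(x)u(y)k_\epsilon(x,y)$, inherited from that of $k_\epsilon$. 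The same implicit-function-theorem machinery, applied to the joint Sinkhorn system in its data $\mu,\nu$ (for discrete measures: in the support points and weights), shows that $(f,g,p,q)$, and hence $\mathcal{S}_\epsilon$ through the working identity, depend smoothly on the inputs, with the envelope theorem again absorbing the derivatives of the potentials.

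It remains to metrise weak convergence. If $\mu_n\rightharpoonup\mu$, then on the compact $\mathcal{V}$ the Sinkhorn potentials of $(\mu_n,\mu)$ and $(\mu_n,\mu_n)$ converge uniformly to those of $(\mu,\mu)$ by stability of the fixed point, so each of the three entropic terms in~\eqref{eq:sinkhorndivergence} converges to $\mathcal{W}_{c,\epsilon}(\mu,\mu)$ and $\mathcal{S}_\epsilon(\mu_n,\mu)\to0$; conversely, if $\mathcal{S}_\epsilon(\mu_n,\mu)\to0$ the bound of the second paragraph gives $\|u_n\mu_n-v\mu\|_{k_\epsilon}\to0$, and since the scalings $u_n$ stay in a fixed equicontinuous family bounded away from $0$ and $\infty$ one extracts along any subsequence $u_n\to u_\ast$ and $\mu_n\rightharpoonup\mu_\ast$ with $u_\ast\mu_\ast=v\mu$, forcing $\mu_\ast=\mu$ as before; independence of the subsequential limit gives $\mu_n\rightharpoonup\mu$. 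The extension to bounded subsets of $\mathcal{V}=\mathbb{R}^d$ with $c(x,y)=\|x-y\|$ or $\tfrac12\|x-y\|^2$ follows by restricting all supports to a large ball and using that the exponential and Gaussian kernels are positive-definite and universal on $\mathbb{R}^d$. The genuinely hard points are the concavity of the debiasing term $\mu\mapsto\mathcal{W}_{c,\epsilon}(\mu,\mu)$ --- which, unlike the cross term, is not a supremum of affine functionals and really needs one to differentiate the Sinkhorn fixed point together with positive-definiteness of the rescaled kernel --- and the uniform stability of Sinkhorn potentials under weak perturbation of the marginals, for which compactness of $\mathcal{V}$ and the Lipschitz bound on $c$ are indispensable.
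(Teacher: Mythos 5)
The paper offers no proof of this theorem at all --- it is imported verbatim from \cite{feydy2020geometric,feydy2019interpolating} with only a citation --- and your reconstruction follows precisely the route of that source: the unconstrained entropic dual with Sinkhorn potentials, evaluation at the suboptimal symmetric pair to obtain $\mathcal{S}_\epsilon(\mu,\nu)\geq\tfrac{\epsilon}{2}\|u\mu-v\nu\|_{k_\epsilon}^2$, injectivity of the kernel embedding for definiteness, and concavity of the autocorrelation term $\mu\mapsto\mathcal{W}_{c,\epsilon}(\mu,\mu)$ for convexity; I see no gap. One small precision: in the definiteness step you really need universality (injectivity of the mean embedding on finite signed measures on the compact $\mathcal{V}$), not merely the characteristic property, because $u\mu$ and $v\nu$ are positive measures of a priori different total mass rather than probability measures.
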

The smoothness of the debiased Sinkhorn divergence with explicit derivation of the gradient is already pointed out in~\cite{luise2018differential} in the discrete measure setting.

The debiased Sinkhorn divergence can be interpreted as an interpolation between the exact Wasserstein metric and kernel maximum mean discrepancies~\cite{ramdas2017wasserstein,genevay2018learning}.
This observation can be seen in the sampling complexity result obtained for the entropic regularized Wasserstein distance $\mathcal{W}_{c,\epsilon}$ due to~\cite{genevay2019sample}.
\begin{theorem}
\label{theorem:entropicSampleComplex}
Let $\mu,\nu$ be a probability on bounded subsets $\mathcal{X},\mathcal{Y}\subset\mathbb{R}^N$ and let $\mathcal{W}_\epsilon$ be defined upon a smooth $L$-Lipschitz cost $c$.
Then, 
\begin{align}
    \mathbb{E}\left[|\mathcal{W}_\epsilon(\mu,\nu) - \mathcal{W}_\epsilon(\mu_n,\nu_n)| \right]
    \in \mathcal{O} \left(
    \frac{\exp(\kappa / \epsilon)}{\sqrt{n}}\left(1 + \frac{1}{\epsilon^{\lceil N/2 \rceil}} \right)\right),
  \end{align}
 with $\kappa = 2|\mathcal{X}|L + \|c\|_\infty$ and the constants only depend on $|\mathcal{X}|,|\mathcal{Y}|, d$ and $\|c^{(k)}\|_\infty$ for $k=0,\ldots,\lceil N/2 \rceil$.
 In particular, we get the following asymptotic behaviour in $\epsilon$,
 \begin{align}
     \mathbb{E}\left[|\mathcal{W}_\epsilon(\mu,\nu) - \mathcal{W}_\epsilon(\mu_n,\nu_n)| \right] &\in \mathcal{O}\left(\frac{\exp(\kappa/\epsilon)}{\epsilon^{\lceil N/2\rceil }\sqrt{n}}\right),\quad \epsilon \to 0, \\
     \mathbb{E}\left[|\mathcal{W}_\epsilon(\mu,\nu) - \mathcal{W}_\epsilon(\mu_n,\nu_n)| \right]
     &\in \mathcal{O}\left(\frac{1}{\sqrt{n}}\right),\quad \epsilon\to +\infty.
 \end{align}
\end{theorem}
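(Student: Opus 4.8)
The plan is to follow the empirical-process route used in~\cite{genevay2019sample}: express $\mathcal{W}_\epsilon$ through its (semi-)dual formulation, so that the gap $|\mathcal{W}_\epsilon(\mu,\nu)-\mathcal{W}_\epsilon(\mu_n,\nu_n)|$ is controlled by a supremum of integrals against the signed measures $\mu-\mu_n$ and $\nu-\nu_n$ over the class of admissible Sinkhorn potentials, and then bound that supremum by the Rademacher complexity of this class. The whole difficulty is then concentrated in showing that the relevant potential class is small --- concretely, a ball in a Sobolev space $H^s(\mathbb{R}^N)$ with $s=\lceil N/2\rceil$ (hence a universal Donsker/pre-Gaussian class for which integrals against $\mu-\mu_n$ contract at rate $1/\sqrt n$), whose radius carries exactly the constants $\exp(\kappa/\epsilon)$ and $\epsilon^{-\lceil N/2\rceil}$ appearing in the statement.

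First I would recall the entropic dual, $\mathcal{W}_\epsilon(\mu,\nu)=\max_{f,g}\ \langle f,\mu\rangle+\langle g,\nu\rangle-\epsilon\langle e^{f/\epsilon}\!\otimes e^{g/\epsilon},\,e^{-c/\epsilon}\,\mu\otimes\nu\rangle+\epsilon$, with optimal potentials characterised by the Sinkhorn fixed-point equations $f(x)=-\epsilon\log\int e^{(g(y)-c(x,y))/\epsilon}\,\mathrm d\nu(y)$ and symmetrically for $g$. From boundedness of $c$ one obtains the a priori $L^\infty$ bounds $\|f\|_\infty,\|g\|_\infty\le \kappa=2L|\mathcal X|+\|c\|_\infty$ (up to the usual additive normalisation). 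The crucial regularity step is that the soft-min operator $g\mapsto -\epsilon\log\int e^{(g-c)/\epsilon}$ maps functions with $k$ bounded derivatives to functions with $k$ bounded derivatives, but each differentiation produces a factor $\epsilon^{-1}$ together with an $e^{\kappa/\epsilon}$-type weight coming from the exponentials; iterating Fa\`a di Bruno up to order $s=\lceil N/2\rceil$ and using $\|c^{(k)}\|_\infty$ for $k\le s$ yields $\|f\|_{H^s},\|g\|_{H^s}=\mathcal O\!\big(e^{\kappa/\epsilon}(1+\epsilon^{-s})\big)$, uniformly in the samples. Call $\mathcal F$ the resulting ball.

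Next I would split: picking $(f_\star,g_\star)$ optimal for $(\mu,\nu)$ and $(\hat f,\hat g)$ optimal for $(\mu_n,\nu_n)$, and using optimality in one direction together with feasibility of the same pair in the other, one gets
\[
|\mathcal W_\epsilon(\mu,\nu)-\mathcal W_\epsilon(\mu_n,\nu_n)|\ \le\ \sup_{f\in\mathcal F}|\langle f,\mu-\mu_n\rangle|\;+\;\sup_{g\in\mathcal F}|\langle g,\nu-\nu_n\rangle|\;+\;(\text{bilinear remainder}),
\]
where the remainder involves $\langle e^{f/\epsilon}\!\otimes e^{g/\epsilon}e^{-c/\epsilon},\,\mu\otimes\nu-\mu_n\otimes\nu_n\rangle$; since $t\mapsto e^{t/\epsilon}$ is Lipschitz on the bounded range with constant $\le e^{\kappa/\epsilon}/\epsilon$, this term is again, after the product-measure decomposition $\mu\otimes\nu-\mu_n\otimes\nu_n=(\mu-\mu_n)\otimes\nu+\mu_n\otimes(\nu-\nu_n)$, a supremum of integrals against $\mu-\mu_n$ (resp.\ $\nu-\nu_n$) of functions lying in a scaled copy of $\mathcal F$. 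The classical Dudley/Rademacher bound for a radius-$R$ ball of $H^s(\mathcal X)$ with $s=\lceil N/2\rceil$ then gives $\mathbb E\,\sup_f|\langle f,\mu-\mu_n\rangle|=\mathcal O(R/\sqrt n)$ --- the covering numbers of such a ball decay fast enough precisely because $s$ reaches $\lceil N/2\rceil$, which is where that exponent enters. Substituting $R=\mathcal O\!\big(e^{\kappa/\epsilon}(1+\epsilon^{-\lceil N/2\rceil})\big)$ yields the claimed bound, and the two asymptotic regimes follow by keeping only the dominant factor: $\epsilon^{-\lceil N/2\rceil}$ as $\epsilon\to0$, the constant $1$ as $\epsilon\to\infty$.

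The main obstacle is the regularity estimate of the second paragraph: carefully propagating \emph{both} the $\epsilon^{-1}$-per-derivative blow-up and the $e^{\kappa/\epsilon}$ weight through the nonlinear soft-min map so that the $H^s$ radius comes out exactly as $e^{\kappa/\epsilon}(1+\epsilon^{-\lceil N/2\rceil})$, while checking that the bilinear remainder does not worsen the $\epsilon$-dependence. A secondary point needing care is that the supremum is genuinely taken over a \emph{data-independent} class, i.e.\ that the bounds on $\|f_\star\|_{H^s}$ and $\|\hat f\|_{H^s}$ are uniform in the samples --- which again reduces to the uniformity of the soft-min regularity estimate over all admissible input potentials.
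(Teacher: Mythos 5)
The paper offers no proof of this theorem --- it is quoted directly from~\cite{genevay2019sample} --- and your reconstruction (entropic dual, uniform $H^s$ bounds on the Sinkhorn potentials via the soft-min smoothing with radius $\mathcal{O}(e^{\kappa/\epsilon}(1+\epsilon^{-s}))$, then the Rademacher/RKHS complexity bound $\mathcal{O}(R/\sqrt{n})$ for a Sobolev ball) is exactly the argument given in that reference, so it is the intended proof. One detail to tighten: a ball of $H^s(\mathbb{R}^N)$ is an RKHS, and hence enjoys the dimension-free $1/\sqrt{n}$ Rademacher bound you invoke, only when $s>N/2$, so the regularity order must be taken as $s=\lfloor N/2\rfloor+1$ rather than $\lceil N/2\rceil$ (for even $N$ these differ), which is why~\cite{genevay2019sample} states the exponent as $\lfloor N/2\rfloor$; your sketch simply inherits the paper's transcription of that exponent.
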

For $\epsilon \to\infty$, we obtain a rate of convergence independent on the dimension $N$, while for $\epsilon \to 0$ the curse of dimensionality appears in terms of $\epsilon$ itself.
The question of the error introduced by the entropic regularization to the  Wasserstein distance was answered in \cite{genevay2019sample}. 
\begin{theorem}
\label{theorem:entropicApproximation}
Let $\mu$ and $\nu$ be probability measures on $\mathcal{X},\mathcal{Y}\subset\mathbb{R}^N$ such that $|\mathcal{X}|=|\mathcal{Y}|\leq D>0$ and assume that the cost $c$ is $L$-Lipschitz with respect to both arguments.
Then it holds
\begin{align}
    0\leq \mathcal{W}_\epsilon(\mu,\nu) - \mathcal{W}(\mu,\nu) \leq 2\epsilon N \log\left(\frac{e^2\cdot L\cdot D}{\sqrt{N}\cdot\epsilon}\right).
\end{align}
In particular, as $\epsilon \to 0$,
\begin{minipage}{0.49\linewidth}
\begin{align}
    0\leq \mathcal{W}_\epsilon(\mu,\nu) - \mathcal{W}(\mu,\nu)\sim 2\epsilon N\log(1/\epsilon).
\end{align}
\end{minipage}
\end{theorem}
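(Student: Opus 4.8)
The plan is to prove the two inequalities separately, the left one being essentially free and the right one requiring the construction of a good suboptimal coupling. For $\mathcal{W}(\mu,\nu)\le\mathcal{W}_\epsilon(\mu,\nu)$ I would note that the entropic problem in~\eqref{eq:entropicLoss} is minimized over exactly the same set of admissible couplings $\{\pi\ge 0:\pi_1=\mu,\ \pi_2=\nu\}$ as the Kantorovich problem, and that on this set the generalized penalty $\operatorname{KL}(\pi,\mu\otimes\nu)$ is nonnegative: for a probability coupling the two mass-correction terms cancel and what is left is the mutual information $\langle\pi,\log\tfrac{\mathrm d\pi}{\mathrm d(\mu\otimes\nu)}\rangle\ge 0$. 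Hence adding $\epsilon\operatorname{KL}(\cdot,\mu\otimes\nu)\ge 0$ to $\langle\cdot,c\rangle$ can only raise the infimum.

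For the upper bound the key idea is a \emph{block (piecewise-product) approximation} of an optimal plan. Fix $\delta>0$ and cover the bounded supports $\mathcal{X},\mathcal{Y}$, both of diameter at most $D$, by axis-parallel cubes $\{Q_i\}$ and $\{R_j\}$ of side length $\delta$; one needs at most $\lceil D/\delta\rceil^N$ cubes in each family. Let $\pi^\ast$ be an optimal transport plan for $\mathcal{W}(\mu,\nu)$ (which exists by compactness of the supports and continuity of $c$), write $P_{ij}:=\pi^\ast(Q_i\times R_j)$, $a_i:=\mu(Q_i)$, $b_j:=\nu(R_j)$, and define the competitor
\[
  \pi^\delta \;:=\; \sum_{i,j}\frac{P_{ij}}{a_i\,b_j}\,(\mu|_{Q_i})\otimes(\nu|_{R_j}),
\]
with empty cells omitted. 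By construction $\pi^\delta$ still has marginals $\mu$ and $\nu$ and puts mass exactly $P_{ij}$ on each block $Q_i\times R_j$, so it is feasible for~\eqref{eq:entropicLoss} and $\mathcal{W}_\epsilon(\mu,\nu)\le\langle\pi^\delta,c\rangle+\epsilon\operatorname{KL}(\pi^\delta,\mu\otimes\nu)$.

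It then remains to estimate the two terms. For the transport cost, since $c$ is $L$-Lipschitz in each argument it oscillates by at most $2L\sqrt N\,\delta$ across any single block $Q_i\times R_j$; as $\pi^\delta$ and $\pi^\ast$ carry the same mass $P_{ij}$ on each block, summing gives $\langle\pi^\delta,c\rangle\le\langle\pi^\ast,c\rangle+O(L\sqrt N\,\delta)=\mathcal{W}(\mu,\nu)+O(L\sqrt N\,\delta)$. For the entropic term, I would observe that the density of $\pi^\delta$ with respect to $\mu\otimes\nu$ is piecewise constant equal to $P_{ij}/(a_ib_j)$ on $Q_i\times R_j$, which collapses the generalized $\operatorname{KL}$ to the discrete relative entropy $\operatorname{KL}(P\,\|\,a\otimes b)=\sum_{ij}P_{ij}\log\frac{P_{ij}}{a_ib_j}$ of the matrix $P$ against its marginals $a,b$. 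Using the identity $\operatorname{KL}(P\|a\otimes b)=H(a)+H(b)-H(P)$ for discrete entropies, together with $H(P)\ge\max\{H(a),H(b)\}$ (a joint law has entropy at least that of either marginal), yields $\operatorname{KL}(\pi^\delta,\mu\otimes\nu)\le\min\{H(a),H(b)\}\le N\log\lceil D/\delta\rceil$. Combining the two estimates gives $\mathcal{W}_\epsilon(\mu,\nu)-\mathcal{W}(\mu,\nu)\le O(L\sqrt N\,\delta)+\epsilon N\log\lceil D/\delta\rceil$, and minimizing the right-hand side over $\delta$ (the optimum is $\delta\sim\epsilon\sqrt N/(2L)$) produces, after absorbing absolute constants, a bound of the form $2\epsilon N\log\!\big(\tfrac{e^2LD}{\sqrt N\,\epsilon}\big)$; sending $\epsilon\to 0$ then gives the stated asymptotics $\sim 2\epsilon N\log(1/\epsilon)$.

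I expect the delicate point to be the entropic estimate rather than the cost estimate or the one-dimensional optimization in $\delta$: one must justify the passage from the continuous generalized $\operatorname{KL}$ (with its mass-correction terms) to the clean discrete relative-entropy identity, and then apply the entropy bound $H(a)\le\log(\#\text{occupied cells})$ carefully enough that the constants line up with the stated $e^2$ factor instead of merely yielding the order $\mathcal{O}(\epsilon N\log(1/\epsilon))$. The cost bound and the optimization over $\delta$ are routine by comparison.
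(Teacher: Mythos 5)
The paper does not prove this statement itself; it is quoted verbatim from~\cite{genevay2019sample}, and your argument reconstructs essentially the proof given there: the lower bound from nonnegativity of the entropic penalty over the (unchanged) set of couplings, and the upper bound from a block piecewise-product modification of an optimal plan, with the cost increment controlled by the Lipschitz oscillation $2L\sqrt{N}\delta$ over cubes of side $\delta$, the entropy controlled by $N\log\lceil D/\delta\rceil$ via $\operatorname{KL}(P\,\|\,a\otimes b)=H(a)+H(b)-H(P)\le\min\{H(a),H(b)\}$, and an optimization over $\delta\sim\epsilon\sqrt{N}/(2L)$. The proposal is correct; the one point you flag as delicate is in fact immediate, since $\pi^\delta$ and $\mu\otimes\nu$ are both probability measures (so the mass-correction terms cancel) and the density $P_{ij}/(a_ib_j)$ is piecewise constant, reducing the continuous $\operatorname{KL}$ exactly to the discrete mutual information.
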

In~\cite{luise2018differential} an improved approximation for discrete measures is provided if the optimal transport plan is accessible.
Concretely, for discrete measures $\mu,\nu$, let 
\begin{align} \pi^\ast = \operatorname{argmin}\limits_{\pi\in\mathcal{D}(\mathcal{V}\times\mathcal{V})}\langle \pi, c\rangle +   \epsilon
    \operatorname{KL}(\pi,\mu\otimes\nu) \\
    \text{subject to } \quad \pi\geq 0, \quad \pi_1=\mu, \quad \pi_2=\nu.
\end{align}
Then, for some $C>0$, 
$$
| \langle \pi^\ast, c \rangle - \mathcal{W}(\mu,\nu) | < C e^{-1/\epsilon}.
$$
However, the solution of~\eqref{eq:discreteOT} in general lacks sparsity due to the regularization and one hence only obtains ``blurred versions'' of the sparse optimal transport plan of the Wasserstein distance~\cite{peyre2019computational}.
%Therefore, the actual computation of the optimal transport plan may lead to memory issues for large sample sizes.

Note that all results above extend to the debiased Sinkhorn divergence itself.
Finally, Theorems~\ref{theorem:entropicSampleComplex} and~\ref{theorem:entropicApproximation} allow to define a ``sweat spot choice'' of $\epsilon$ for the numerical evaluation of $\mathcal{W}_\epsilon$ as discussed in the following section.

\subsection{Computation of discrete OT}
\label{sec:computation discrete OT}

The following primal problem is the discrete counterpart to~\eqref{eq:discreteOT} for the entropic regularization
\begin{equation}
\label{eq:primal}
\mathcal{W}_{c,\epsilon}(\mu_n,\nu_m) = \min\limits_{\pi\in \mathcal{U}_{nm}} \langle \pi, C\rangle + \epsilon\langle \pi,\log\pi-\bm{1}_{n,m}\rangle,
\end{equation}
where $\bm{1}_{n,m}\in\mathbb{R}^{n,m}$ is a matrix with all entries equal to one and component-wise application of the logarithm.
Defining the kernel matrix $K:=\exp(-C/\epsilon)$ with component-wise application of the exponential and recalling the definition of weights $a$ and $b$ as in Section~\ref{sec:exacttransport}, the dual problem reads
\begin{equation}
\label{eq:dual}
    \sup\limits_{f\in\mathbb{R}^n,g\in\mathbb{R}^m} -\left\langle a, f\right\rangle - \left\langle b,g\right\rangle - \epsilon \left\langle \exp( -f/\epsilon), K \exp(-g/\epsilon)\right\rangle.
\end{equation}
The primal and the dual formulation are linked via the optimality conditions
\begin{equation}
\label{eq:optimalityCond}
    \left\{
    \begin{array}{rcl}
       \pi    & =  & \operatorname{diag}(\exp\{-f/\epsilon\}) K \operatorname{diag}(\exp\{ -g/\epsilon\}),\\
       a  & = & \operatorname{diag}(\exp\{-f/\epsilon\})
     K\exp\{-g/\epsilon\}, \\
     b &= &
     \operatorname{diag}(\exp\{-g/\epsilon\})
     K^\intercal \exp\{-f/\epsilon\}.
    \end{array}
    \right.
\end{equation}
These form the starting point for the numerical computation. 
The classical Sinkhorn–Knopp algorithm~\cite{cuturi2013sinkhorn} relies on a fixed point iteration of the function
$$
\Phi(u,v) = \left[
\begin{array}{ll}
b \oslash K^T v\\ a\oslash K u 
\end{array}
\right] \in \mathbb{R}^{n+m},
$$
where $\oslash$ denotes pointwise division of vectors.
This scheme is related to the last two conditions in~\eqref{eq:optimalityCond} with $(u^\ast, v^\ast)$ denoting the fixed point with $u^\ast = \exp(f^\ast/\epsilon)$, $v^\ast = \exp(g^\ast/\epsilon)$ and the unique solution to~\eqref{eq:dual} $f^\ast,g^\ast$.
Let $\delta$ be a required tolerance for the iteration error.
Then the Sinkhorn algorithm outputs an approximated transportation plan in $\mathcal{O}(\delta^{-2}n^2 \|C\|_\infty^2 \ln n )$ iterations~\cite{altschuler2017near,dvurechensky2018computational}. 
Alternatively,~\eqref{eq:primal} or~\eqref{eq:dual} might be solved with other solvers such as an ``Adaptive Primal-Dual Accelerated Gradient Descent''~\cite{dvurechensky2018computational} in $\mathcal{O}\left(\min \left\{
\delta^{-1}n^{9/4}\sqrt{\|C\|_\infty \ln n },
\delta^{-2}n^2 R\|C\|_\infty \ln n
\right\}
\right)$ or with a Newton scheme~\cite{brauer2017sinkhorn} which results in local quadratic convergence.
Since the regularization parameter $\epsilon$ causes numerical instabilities in the iterative process as $\epsilon\to 0$, a stabilized Sinkhorn algorithm variant has been formulated in $\log$ space by several authors~\cite{dvurechensky2018computational, schmitzer2019stabilized}.

With a large number of point data $n,m\gg 1$, these classic formulations may still become computationally cumbersome or infeasible.
Hence, modern state-of-the-art computations of regularized optimal transport or Sinkhorn divergences rely on $\epsilon$-scaling heuristics, adaptive kernel truncation or multiscale schemes~\cite{schmitzer2019stabilized}, \textit{e.g.} enhanced by $k$-mean clustering~\cite{feydy2020geometric}.
A very promising recent approach relies on streamed sample data in a so-called online Sinkhorn scheme~\cite{mensch2020online} to better approximate the original continuous regularized optimal transport problem with a sampling complexity arbitrarily close to $\mathcal{O}(1/N)$. 
From the vast amount of codes for computational transport, we only refer to \verb!GeomLoss! by Jean Feydy~
\cite{feydy2019interpolating} with embarrassingly parallel GPU implementations and a reference online implementation with linear memory footprint.
The present work relies on this library for the numerical experiments, which allows to carry out the measure fitting with efficient computation of the gradients with sample data $n>\num[round-precision=2,round-mode=figures]{1e+06}$ in seconds.

\section{Model class and stochastic reference coordinate system}
\label{sec:modelclass}
The following section is devoted to the design of the model class $\mathcal{M}$ and the underlying stochastic reference coordinate system $X$, defining the parameter dependent random variable $\mathcal{M}(X)$ that we aim to fit closely to $Y$ in distribution.
The first section is devoted to the actual dimension of the random variable $X$.
We introduce our model class based on compressed multi-element generalized polynomial chaos. Finally, we formulate the optimization problem and discuss involved challenges.

\subsection{Dimensionality}
\label{sec:dimensionality}

It is inevitable to choose a suitable reference coordinate system for an efficient approximation of the unknown $Y$ in distribution.
Given a sufficient amount of samples of $Y$ such that empirical marginals can be inferred with some confidence, the question is if we can make an informed guess about the dimensions of the reference coordinate system $X$.
It turns out that at first glance, the choice is very flexible as stated next.

\begin{theorem}
Let $X, Y$ be continuous random vectors with images  $\operatorname{img}X\subset \mathbb{R}^M$ and $\operatorname{img}Y\subset\mathbb{R}^N$ for $1\leq N,M < \infty$ such that there exists an invertible transformation $T_Y$ with uniformly distributed $T_Y(Y)$ on $[0,1]^M$.
Then there exists $f\colon \mathbb{R}^M\to \mathbb{R}^N$ such that $Y \equald f(X)$.
\end{theorem}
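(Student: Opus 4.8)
The plan is to build $f$ as a composition of three explicit maps: a transformation of $X$ onto the uniform law on $[0,1]^M$, the identity (both auxiliary variables will be uniform on the same cube, hence already equal in law), and the inverse of the given transformation $T_Y$. The dimension bookkeeping $\mathbb{R}^M \xrightarrow{\;T_X\;} [0,1]^M \xrightarrow{\;T_Y^{-1}\;} \mathbb{R}^N$ is exactly what makes $M\neq N$ admissible.

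First I would invoke the Rosenblatt transformation for the continuous random vector $X=(X_1,\dots,X_M)$: letting $F_1$ be the marginal distribution function of $X_1$ and $F_{k\mid 1,\dots,k-1}(\,\cdot\mid x_1,\dots,x_{k-1})$ the conditional distribution function of $X_k$ given $X_1=x_1,\dots,X_{k-1}=x_{k-1}$, the map
\[
T_X(x_1,\dots,x_M) := \bigl(F_1(x_1),\,F_{2\mid 1}(x_2\mid x_1),\,\dots,\,F_{M\mid 1,\dots,M-1}(x_M\mid x_1,\dots,x_{M-1})\bigr)
\]
is a measurable map $\mathbb{R}^M\to[0,1]^M$ for which $T_X(X)$ is uniformly distributed on $[0,1]^M$. (Equivalently one may quote the classical fact that any atomless Borel probability measure on a standard Borel space is measure-isomorphic to Lebesgue measure on $[0,1]$, hence on $[0,1]^M$.) Crucially, I only need $T_X$ to push the law of $X$ forward to the uniform law; I do \emph{not} need $T_X$ to be invertible, which sidesteps any worry about the zero set of the density of $X$.

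Next, by hypothesis there is an invertible transformation $T_Y$ with $T_Y(Y)$ uniform on $[0,1]^M$; in particular $T_Y^{-1}\colon[0,1]^M\to\operatorname{img}Y\subset\mathbb{R}^N$ is a well-defined measurable map (defined at least on a set of full Lebesgue measure, extended arbitrarily on the complementary null set) and $T_Y^{-1}(T_Y(Y))=Y$ almost surely. Since $T_X(X)$ and $T_Y(Y)$ are both uniformly distributed on $[0,1]^M$, they have the same law, $T_X(X)\equald T_Y(Y)$. Applying the fixed measurable map $T_Y^{-1}$ to both sides preserves equality in distribution, so setting
\[
f := T_Y^{-1}\circ T_X\colon\mathbb{R}^M\to\mathbb{R}^N
\]
yields $f(X)=T_Y^{-1}(T_X(X))\equald T_Y^{-1}(T_Y(Y))=Y$, i.e. $Y\equald f(X)$, as claimed.

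The only genuinely non-routine ingredient is the first step — producing a measurable map that sends the law of $X$ \emph{exactly} to the uniform distribution on the cube. Through the Rosenblatt construction this rests on the existence and a.e.\ continuity of the iterated conditional distribution functions of the continuous vector $X$; through the abstract route it rests on the standard isomorphism theorem for atomless standard probability spaces. Everything after that is bookkeeping: composition of measurable maps, the fact that the pushforward of a random variable under a fixed measurable map depends only on its law, and the trivial observation that any two uniform laws on $[0,1]^M$ coincide.
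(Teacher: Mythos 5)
Your proof is correct for the theorem as literally stated, and it takes a genuinely different -- and cleaner -- route than the paper. You exploit the hypothesis that $T_Y(Y)$ is uniform on $[0,1]^M$ (the \emph{same} cube that $T_X(X)$ lands on), so the two uniformized variables already agree in law and $f=T_Y^{-1}\circ T_X$ finishes the argument; the only real work is the Rosenblatt (or abstract isomorphism) construction of $T_X$, and you correctly observe that $T_X$ need not be invertible. The paper instead uniformizes $Y$ onto $[0,1]^N$ and bridges the dimension mismatch with a set-theoretic bijection $\Phi\colon[0,1]^M\to[0,1]^N$ (existing by equal cardinality), followed by a further uniformization $T_Z$ of $Z=\Phi(T_X(X))$, yielding $f=T_Y^{-1}\circ T_Z\circ\Phi\circ T_X$. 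That version is what one would need if the hypothesis read ``$T_Y(Y)$ uniform on $[0,1]^N$'' (which is likely the intended statement, since the natural Rosenblatt transform of an $N$-dimensional $Y$ lands in $[0,1]^N$), but it is also the shakier argument: a pure cardinality bijection $\Phi$ is in general non-measurable, so $Z$ need not be a random variable, and the existence of the re-uniformizing $T_Z$ for the law of $Z$ is asserted without justification. Your composition avoids both of these gaps; its only cost is that it leans entirely on the hypothesis being about $[0,1]^M$, so if the statement were corrected to $[0,1]^N$ you would need to insert a \emph{measurable} measure isomorphism between the two cubes (which does exist, e.g.\ by interleaving binary digits or by the Borel isomorphism theorem for atomless standard probability spaces) in place of the paper's arbitrary $\Phi$.
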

\begin{proof}
Consider transformations $T_X$ and $T_Y$ such that $T_X(X)$ and $T_Y(Y)$ are uniformly distributed over $[0,1]^M$ and $[0,1]^N$, respectively.
Those transformations may be constructed via the marginal cumulative distribution functions as in the Rosenblatt transformation~\cite{lebrun2009rosenblatt}.
Since $[0,1]^M$ and $[0,1]^N$ have same cardinality, there exists a bijection $\Phi\colon[0,1]^M\to[0,1]^N$.
Define the random variable $Z:=\Phi( T_X(X))$ with image $\operatorname{img}(Z)\subset [0,1]^N$.
Furthermore, let $T_Z$ be a transformation such that $T_Z(Z)$ is uniformly distributed on $[0,1]^N$.
Then by construction, 
\begin{equation}
\label{eq:equaldistributionYfX}
     Y \equald f(X), \quad f:= T_Y^{-1} \circ T_Z \circ \Phi \circ T_X. \qedhere
\end{equation}
\end{proof}
We hence found a representation of a continuous $N$-dimensional random vector based on an $M$-dimensional stochastic reference coordinate system $X$.
Note that $f$ in general may not be bijective, highly non-linear and not necessarily continuously differentiable.
Moreover, the choice of $f$ is not unique.

In practice, we may look for the components of $Y$ that are highly correlated and choose the dimension of $M$ dependent on $N$ and the number of highly correlated components.
This is motivated by the fact that the image of highly correlated components might be close to a lower-dimensional structure and thus could be easier approximated in distribution in a lower-dimensional reference coordinate system. 

As the other extreme, $Y$ may consist of fully uncorrelated components but have a simple one-dimensional dependency to be explored.
This setting is discussed in the following example.

\begin{example}
\label{ex:uncorrelated}
Let $X\sim\mathcal{U}(-\pi,\pi)$.
Define $Y:=(\sin(X), \cos(X), \sin(2X),\ldots, \cos(nX))$ with images in $\mathbb{R}^{2n}$ for $n\in\mathbb{N}$.
Then all components of $Y$ are uncorrelated, in particular $\mathbb{E}[Y_i,Y_j]= c_{ij}\partial_{ij}$ for some constant $c_{ij}\in\mathbb{R}$.
\end{example}

\subsection{Polynomial chaos expansion and STT compression}
\label{sec:pcett}

Let $X\sim \mu = \mu_0 \otimes \ldots \mu_0$ be an $M$-dimensional finite reference coordinate system with $X=(X_1,\ldots, X_M)$ and independent reference random variables $X_i \sim \mu_0$, $i=1,\ldots,M$ such that $\mu_0$ has finite arbitrary moments.
Let $\{p_{i}, i\in\mathbb{N}_0\}$ with $p_0 \equiv 1$ be a set of orthogonal polynomials w.r.t. to $\mu_0$, i.e. $\mathbb{E}[p_i(X_1)p_j(X_1)] := \partial_{ij}$. 
We define the set of tensorized stochastic polynomials 
\begin{equation}
    \{P_\alpha, \alpha\in \mathbb{N}_0^M\},\quad P_{\alpha}(X) = \prod\limits_{i=1}^M p_{\alpha_i}(X_i).   
\end{equation}
These polynomials are orthogonal w.r.t. $\mu$, in particular $\mathbb{E}[P_\alpha(X)P_\beta(X)]=\partial_{\alpha\beta}$ for $\alpha,\beta\in\mathbb{N}_0^M$.
Such type of polynomial construction typically arise in the context of stochastic Galerkin schemes with convergence in $L^p(\Omega,\mu, \mathcal{V}))$  for some separable Banach space $\mathcal{V}$ \cite{ghanem2003stochastic, xiu2002wiener, schwab2011sparse, ernst2012convergence,dunkl2014orthogonal} even for the case $M=\infty$.
Given an unknown random variable $Y$ taking values in $\mathbb{R}^N$, we look for an approximation in distribution of the form 
\begin{align}
\label{eq:PCEapprox}
    Y  \apprxd Y_{\mathrm{PCE}}:=  \sum\limits_{\alpha\in\mathbb{N}_0^M} C[\alpha] P_\alpha(X),
    \quad C\colon \mathbb{N}_0^M\to \mathbb{R}^N.
\end{align}
Practically, we aim for a truncated version of the polynomial chaos expansion.
Consider a finite index set $\Lambda\subset\mathbb{N}_0^M$, $0\in\Lambda$ with $0\in\mathbb{N}_0^M$ and $|\Lambda|<\infty$.
We define the truncated polynomial chaos expansion representation $Y_\Lambda\apprxd Y_{\mathrm{PCE}}$ by 
\begin{align}
\label{eq:truncatedPCE}
     Y_\Lambda := \sum\limits_{\alpha\in \Lambda} C_\Lambda[\alpha] P_\alpha(X),\quad C_\Lambda \colon \Lambda \to \mathbb{R}^N.
\end{align}
Representation~\eqref{eq:truncatedPCE} may suffer from the curse of dimensionality since the set $\Lambda$ grows exponential in $M$ without additional sparsity assumptions.
In order to see this, consider the  tensorized index set
\begin{equation}
    \Lambda = \bigtimes\limits_{m=1}^M\Lambda_m, \quad\Lambda_i\subset\mathbb{N}_0.
\end{equation}
If $|\Lambda_m|=d\in\mathbb{N}$ for $m=1,\ldots M$, then the tensor $C_\Lambda$ has $d^M$ inputs, resulting in a total of $N d^M$ degrees of freedom in this approximation class.
A possible technique to circumvent such exponential complexity can be found with tensor compressions by so-called \textit{low-rank formats}.
Popular tensor formats in the literature are tensor trains (TT)~\cite{oseledets2011tensor} or hierarchical tensors (HT)~\cite{hackbusch2012tensor} for $N=1$.

In the numerical investigations we observe an unfavourable exponential growth of ranks with growing $N$.
To alleviate this, we consider a \textit{tensor network format} as our compression tool, namely stacks of tensor trains (STT).
To make this concrete, define $r_0=r_M=1$ and let $r_m\in\mathbb{N}$ for $m=1,\ldots, M-1$ and $d_m=|\Lambda_m|$.
Moreover, choose $C_\Lambda$ of the form
\begin{align}
\label{eq:tensorringdecomp}
    C_\Lambda[\alpha] = C_{\Lambda_1}[\alpha_1]\bullet \ldots \bullet C_{\Lambda_M}[\alpha_M],\quad
    C_{\Lambda_j}[\alpha_j] \colon \mathbb{N}^{r_{j-1}, r_j} \to \mathbb{R}^N.
\end{align}
Here, $\bullet$ denotes the contraction between adjacent indices and the Hadamard product w.r.t. to the output dimension $N$.
Specifically, for $A\colon \mathbb{N}^{q,r}\to\mathbb{R}^N$, $B\colon \mathbb{N}^{r,s}\to\mathbb{R}^N$ and with Hadamard product $\odot$,
\begin{equation}
\label{eq:constraction}
    A \bullet B = \sum\limits_{k=1}^{r} A[:,k] \odot B[k,:] \colon \mathbb{N}^{q,s}\to\mathbb{R}^N.
\end{equation}
We may identify the output dimension $N$ with an additional index $i$ as a tensor representation.
With some abuse of notation, we let $C_\Lambda[i, \alpha] = (C_\Lambda[\alpha])_i$ such that~\eqref{eq:tensorringdecomp} can be rewritten as
\begin{align}
\label{eq:tensorringpointwise}
    C_\Lambda[i, \alpha] = C_{\Lambda_1}[i, \alpha_1]\cdot\ldots\cdot C_{\Lambda_M}[i, \alpha_M] \in\mathbb{R},\qquad i=1,\ldots,N.
\end{align}
Thus, the components $C_{\Lambda_1}$ define tensors of order $4$ and we compactly use the notation and ordering of indices as  
\begin{equation}
\label{eq:coeffdecomp}
    C_{\Lambda_m} =  \left(C_{\Lambda_m}[k_{m-1}, i, \alpha_m, k_m]\right)\in \mathbb{R}^{r_{m-1}, N, d_m, r_m},\quad m=1,\ldots,M
\end{equation}
where $i=1,\ldots,N$ resorts to the second and $\alpha_m\in \Lambda_m$, $|\Lambda_m|=d_m$ resorts to the third index, respectively.
In summary, we seek a stack of tensor trains with uniform rank $\bm{r}=(r_0,\ldots, r_M)$ with the convention $r_0=r_M=1$ of the form
\begin{align}
    \label{eq:fulltenorringrespresentation}
    Y_\Lambda =  \sum\limits_{k_1=1}^{r_1}\ldots\sum\limits_{k_{m-1}=1}^{r_{m-1}}
                \bigodot\limits_{m=1}^M C_{\Lambda_m}[k_{m-1}, :, \alpha_m, k_m] p_{\alpha_m}(X_m),
\end{align}
where the colon ``$:$'' denotes the vector representation of output dimension $N$.
Note that while the representation in~\eqref{eq:fulltenorringrespresentation} looks complicated at first glance, it is only a contraction of tensors of order $4$, which can be realized efficiently in practice using the Einstein summation convention.

\begin{remark}
    The choice of uniform rank $\bm{r}$, i.e. the same TT rank for each component $i=1,\ldots, N$, in this work is only for simplicity of presentation.
    It is straightforward to extend to the case of individual TT ranks within the stack, which would potentially lead to a more efficient representation.
\end{remark}

Once such a representation~\eqref{eq:fulltenorringrespresentation} is available, we can conveniently compute statistics such as moments in a sampling-free manner, which is a crucial benefit of such formats.
This is due to the separated structure in the dependence of the input reference coordinate system $X=(X_1,\ldots, X_m)$.
The required high-dimensional quadrature then corresponds to simple index contractions.

Assume $r_m =r\in\mathbb{N}$ for $m=1,\ldots, M-1$ and $d_m=d\in\mathbb{N}$ for $m=1,\ldots,M$.
Then, the degrees of freedom of the proposed STT format are given by
\begin{align*}
   N\left( \sum_{m=1}^M r_{m-1}d_m r_{m} - \sum_{m=1}^{M-1} r_m^2\right) = N ( (2r + (M-2)r^2 )d - (M-1)r^2) \in \mathcal{O}(NMd).
\end{align*}
This means that a linear dependence on the input dimension $M$ related to approximating $X$ can be achieved, effectively circumventing the curse of dimensionality that would otherwise occur with a full tensor with $N M^d$ entries.
We point out that the STT format of fixed rank is closed with respect to the Frobenius norm. This property is inherited directly from the closedness of the classical TT format.

For a more specific ``dependence pattern'' of $Y_\Lambda$, we can set entries in the components $C_{\Lambda_m}$ denoted as \textit{degrees of freedom} to zero and thus specify the dependence of the output.
As an example, for given $i\in\{1,\ldots,N\}$, setting
$$C_{\Lambda_m}[k_{m-1},i,\alpha_m,k_m]=0,\quad k_{m-1}=1,\ldots,r_{m-1}, \alpha_m =1,\ldots, d_m, k_m = 1,\ldots,r_m,$$
corresponds to no dependence of $(Y_\Lambda)_i$ on $X_m$.
In addition to the practical meaning of the model, this also reduces the number of degrees of freedom in the optimization process.
Special cases involve that each component in $Y_\Lambda$ is modeled independently with a univariate polynomial chaos expansion or a triangular parameter dependence structure as in Knothe-Rosenblatt transport maps in the case of $N=M$, see~\cite{zech2021sparse} for a recent result.

\subsubsection{Multi-element polynomial chaos}
\label{sec:multielementpce}
A straight-forward generalization of the polynomial chaos as model class is the multi-element generalized polynomial chaos (GPC)~\cite{wan2006multi} described in this section.
Let $X=(X_1,\ldots,X_M)$ denote a reference coordinate system with image $\mathcal{X}=\operatorname{img}(X)\subset\mathbb{R}^M$ and density $f_X$.
Moreover, let $\mathcal{X}^s$ denote a disjoint partition of $\mathcal{X}$ for $s=1,\ldots, S\in\mathbb{R}$.
Assume that the underlying distribution $\mu$ of $X$ has arbitrary finite moments, then one can define a set of functions $\{P_\alpha^s:\alpha \in \mathbb{N}_0^M\}$ that are piecewise polynomials on $\{\mathcal{X}^s\}$ with support on $\mathcal{X}^s$ and also orthonormal with respect to the expectation.
In particular, it holds for $\alpha,\beta\in\mathbb{N}_0^M, s,s'\in\mathbb{N}$ that
$$
\mathbb{E}[P_\alpha^s(X)P_\alpha^{s'}(X)] = 
\int\limits_{\mathcal{X}^s\cup\mathcal{X}^{s'}} \chi_{\mathcal{X}^s\cap \mathcal{X}^{s'}}(x)
P_\alpha^s(x)P_\beta^{s'}(x) f_X(x)\mathrm{d}\lambda(x)
= 
\delta_{\alpha,\beta}\delta_{s,s'}, 
$$
where $\chi$ denotes the indicator function.

On the one hand, due to the locality of the function, with this notion a model class is able to move masses of probability separately subject to the partition, which is illustrated in Figure~\ref{fig:multimodalsketch}.
On the other hand, if $\mathcal{X}$ is compact and $Y$ is a multimodal random variable such that each accumulation of mass is completely disjoint, there exists no continuous model class $\mathcal{M}$ such that $Y\equald \mathcal{M}(X)$ since probability mass would have to be split.
This in turn motivates the usage of piecewise continuous approximation classes.

\begin{remark}
In practice, to identify multimodal behaviour, one may utilize methods based on $k$-means clustering to determine the number of modes $k$, \textit{e.g.} using the G-means algorithm of~\cite{hamerly2004learning}.
This can then be used to define an adequate partitioning of $S$.
Note that the definition of the partition and thus the structure of non-continuity can be chosen arbitrarily as long as the resulting generation of reference coordinate samples is well balanced across the subdomains.
One should specifically avoid situations where only few samples are in some $\mathcal{X}^s$, resulting in limited available information for fitting measures.
\end{remark}

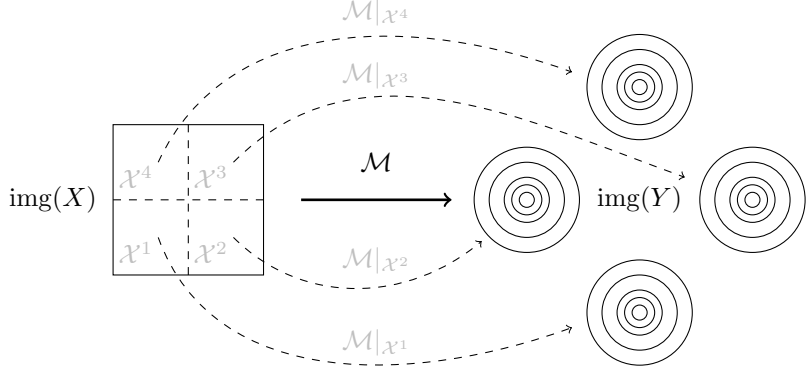
\begin{figure}
\begin{center}
\begin{tikzpicture}

\draw (-7,-1) -- ( -5, -1) -- ( -5 ,1) -- ( -7, 1) --cycle;

\draw[dashed] (-6,-1) -- (-6,1);
\draw[dashed] (-7, 0) -- (-5,0);

\foreach \x in {0.1, 0.2, 0.3, 0.5, 0.7}{
    \draw (1.5,0) circle (\x);
    \draw (0,1.5) circle (\x);
    \draw (0,-1.5) circle (\x);
    \draw (-1.5, 0) circle (\x);
}

\draw[->, line width = 1] (-4.5, 0) -- (-2.5,0);

\draw[->, dashed] (-5.4,0.5)  to [out=45,in=160] (0.6,0.3);

\draw[->, dashed] (-6.4, 0.5) to [out=65, in = 160] (-0.9, 1.7);

\draw[->, dashed] (-6.4, -0.5) to [out=-70, in=-160] (-0.9, -1.7);

\draw[->, dashed] (-5.4, -0.5) to [out=-45, in=-140] (-2.1,-0.55);

\node at (-3.5,0.5) {$\mathcal{M}$};
\node at (-7.8, 0) {$\operatorname{img}(X)$};
\node at (0, 0) {$\operatorname{img}(Y)$};

\node[opacity = 0.5, gray] at (-5.7, 0.3) {$\mathcal{X}^3$};
\node[opacity = 0.5, gray] at (-6.7, 0.3) {$\mathcal{X}^4$};
\node[opacity = 0.5, gray] at (-5.7, -0.7) {$\mathcal{X}^2$};
\node[opacity = 0.5, gray] at (-6.7, -0.7) {$\mathcal{X}^1$};

\node[opacity=0.5, gray] at (-3.5, -1.9) {$\mathcal{M}|_{\mathcal{X}^1}$};

\node[opacity=0.5, gray] at (-3.5, -0.8) {$\mathcal{M}|_{\mathcal{X}^2}$};

\node[opacity=0.5, gray] at (-3.5, 1.65) {$\mathcal{M}|_{\mathcal{X}^3}$};

\node[opacity=0.5, gray] at (-3.5, 2.5) {$\mathcal{M}|_{\mathcal{X}^4}$};

\end{tikzpicture}
\caption{Sketch of a separated propagation of masses through a model class based on piecewise continuous representations, \textit{e.g.} realized via multi-element polynomial chaos.}
\label{fig:multimodalsketch}
\end{center}
\end{figure}

\subsection{The optimization problem}
\label{sec:optimization problem}

In the following the optimization procedure is described that allows to obtain a transport representation as described before.
Let $\theta$ denote the degrees of freedom of the parametric model class $\mathcal{M}(\cdot) = \mathcal{M}[\theta](\cdot)$, which \textit{e.g.} could be a STT format with underlying (multi-element) polynomial chaos as feature functions.
Here, $\theta$ relates to the sub-parts of the STT decomposition of the coefficient tensor as in~\eqref{eq:coeffdecomp}.
For a given stochastic coordinate system $X$ and a batch of realizations $\bm{Y}\in\mathbb{R}^{n, N}$ of the random variable $Y$, compute $m$ samples of $X$ represented as a batch of samples $\bm{X}\in \mathbb{R}^{m, M}$.
Then, for fixed $\theta$, $\mathcal{M}[\theta][\bm{X}]\in \mathbb{R}^{m, N}$ defines a batch of samples that can be compared to $\bm{Y}$ in terms of empirical measures $\mu_n$ and $\nu_m=\nu_m[\theta]$ and the debiased Sinkhorn divergence.
The respective optimization problem reads
\begin{equation}
    \label{eq:optprob}
    \min\limits_{\theta} \mathcal{S}_\epsilon(\mu_n,\nu_m[\theta]).
\end{equation}
Based on the dependence structure of $\mathcal{M}$ on $\theta$, \eqref{eq:optprob} defines a non-convex non-linear optimization problem with an almost everywhere differentiable function.
Since the STT format of fixed rank forms a Riemannian manifold and the loss $S_{\epsilon}$ is smooth, we can utilize Riemannian descent methods for hierarchical low-rank formats based on automatic differentiation~\cite{GM2024} to obtain a local minimum of \eqref{eq:optprob}.

We demonstrate the non-convexity and non-linearity for a very simple model class with linear parameter dependence in Example~\ref{example:locMinima}.

\begin{example}
\label{example:locMinima}
Let $X\sim\mathcal{U}(-1,1)$ be uniformly distributed and denote by $L_k$ the $k$-th Legendre polynomial defined on $[-1,1]$.
Moreover, let $Y = L_2(X)$ and the model class $\mathcal{M}(x) = c_0 + c_2L_2(x)$.
Let $\theta = [c_0,c_2]$, then $\theta^\ast = [0,-1]$ and $\theta^\ast=[0,1]$ define local minima of the function $\theta\mapsto \mathcal{S}_\epsilon(\mu_n,\nu_m[\theta])$ up to statistical noise introduced by the finite number of samples $n,m\in\mathbb{N}$.
The left plot in Figure~\ref{fig:localminiCuttoff} illustrates this example in log-scale.
\end{example}

\begin{proposition}
\label{prop:oddfunction}
Let $X_1,X_2\colon \Omega \to \mathbb{R}^M$ with $X_1\equald -X_1 \equald X_2$ and $f\colon\mathbb{R}^M\to\mathbb{R}^N$ be measurable.
Then $f(X_1) = f(X_2)$ in distribution.
Moreover, if $f$ is odd, it holds that $f(X_1) = -f(X_2)$ in distribution.
\end{proposition}
\begin{proof}
Since $f$ is odd it holds that $f(-x)=-f(x)$ for $x\in \Xi$.
Given that $X_1=X_2$ in distribution and $X_1 = -X_1$ in distribution %since $\mathcal{D}(\mathbb{R}^n)$ is symmetric, 
it follows $f(X_1) = f(-X_1) = -f(X_1) = -f(X_2)$.
\end{proof}
As a consequence, given an orthogonal model class with respect to the  reference coordinate system $X\sim \mathcal{D}(\mathbb{R}^n)$ of a symmetric distribution $\mathcal{D}(\mathbb{R}^n)$, we can use a decomposition like
\begin{align}
    \mathcal{M}(x) =  \sum\limits_{\alpha\in\mathrm{ODD}} C[\alpha] P_\alpha(x)+ \sum\limits_{\alpha\notin\mathrm{ODD}}C[\alpha] P_\alpha(x).
\end{align}
Here $\mathrm{ODD}\subset\Lambda$ denotes the set of indices corresponding to odd polynomials.
Due to Proposition~\ref{prop:oddfunction}, we can choose $C[\alpha]\geq 0$ for $\alpha\in\mathrm{ODD}$.
This may reduce the number of local minima significantly, in particular for $d$ getting large.
Typical examples for symmetric distributions are the uniform distribution on $(-1,1)^d$ and the standard normal distribution on $\mathbb{R}^d$.

\begin{example}
\label{example:manylocMinima}
Let again $X\sim\mathcal{U}(-1,1)$ and $L_k$ denote the $k$-th Legendre polynomial defined on $[-1,1]$.
Assume $Y = L_1(X) + L_2(X)$ and the model class $\mathcal{M}$ be given as $\mathcal{M}(x) = c_1L_1(x) + c_2L_2(x)$.
The right plot in Figure~\ref{fig:localminiCuttoff} illustrates the example with multiple local minima in log-scale, including the mentioned reduction effect.
\end{example}

\begin{proposition}
\label{prop:mean}
Let $\mathcal{M}$ be a model class as in Section~\ref{sec:pcett} with orthonormal basis of increasing degree encoded in $\alpha\in\mathbb{N}_0^d$ w.r.t. the underlying stochastic coordinate system $X$.
Then, $C[:, 0] = \mathbb{E}[ \mathcal{M}(X) ]$.
\end{proposition}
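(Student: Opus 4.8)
The plan is to exploit the orthonormality of the polynomial basis together with the normalization $p_0\equiv 1$. First I would record that, since $p_0\equiv 1$, the zeroth tensorized polynomial is the constant function, $P_0(X)=\prod_{i=1}^M p_0(X_i)\equiv 1$, so in particular $\mathbb{E}[P_0(X)]=1$. Next, for any $\alpha\in\Lambda$ the orthonormality of $\{P_\alpha\}$ with respect to $\mu$ (established in Section~\ref{sec:pcett}) yields
\begin{align*}
\mathbb{E}[P_\alpha(X)] = \mathbb{E}[P_\alpha(X)P_0(X)] = \delta_{\alpha,0}.
\end{align*}

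Then, using linearity of the expectation over the finite index set $\Lambda$ (recall $0\in\Lambda$), applied componentwise to the $\mathbb{R}^N$-valued coefficients,
\begin{align*}
\mathbb{E}[\mathcal{M}(X)] = \mathbb{E}\Big[\sum_{\alpha\in\Lambda} C[\alpha]P_\alpha(X)\Big] = \sum_{\alpha\in\Lambda} C[\alpha]\,\mathbb{E}[P_\alpha(X)] = \sum_{\alpha\in\Lambda} C[\alpha]\,\delta_{\alpha,0} = C[0],
\end{align*}
which in the index notation of~\eqref{eq:coeffdecomp} is precisely $C[:,0]$.

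No genuine obstacle is expected here; the statement is essentially a bookkeeping consequence of orthonormality. The only points that deserve care are: (i) recording that the constant function $P_0$ coincides with the normalization constant of the basis, so that no additional scaling factor enters the identity; (ii) reading the identity slicewise in the output index when $\mathcal{M}$ is given in the tensor train ring parametrization, i.e. $C[i,0]=\mathbb{E}[(\mathcal{M}(X))_i]$ for $i=1,\dots,N$, which follows by applying the scalar argument to each output component separately (using that the Hadamard-contracted representation~\eqref{eq:fulltenorringrespresentation} is, for fixed $i$, a classical scalar polynomial chaos expansion); and (iii) the interchange of expectation and summation, which is immediate because $\Lambda$ is finite and $\mu_0$ has finite moments of all orders, so every $P_\alpha(X)$ is integrable.
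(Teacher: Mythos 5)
Your proof is correct and follows exactly the route the paper intends: the paper's own proof is the one-line remark that the claim ``follows by construction, since the basis is orthonormal w.r.t.\ the associated measure $\mu$ of $X$,'' and your argument simply spells out that observation ($P_0\equiv 1$, $\mathbb{E}[P_\alpha(X)]=\delta_{\alpha,0}$, linearity over the finite index set, read componentwise in the output index). No discrepancy to report.
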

\begin{proof}
The proof follows by construction, since the basis is orthonormal w.r.t. the associated measure $\mu$ of $X$. 
\end{proof}
Consequently, given $N$ samples of the unknown random variable $Y$, we can compute the empirical mean and bound $C[:,0]$ around the empirical mean value in the optimization routine.
In particular, in the numerical experiments the obtained optimized parameter always results in the model class matching the empirical mean exactly.
This observation is in agreement with the best possible sample rate of $1/\sqrt{n}$ obtained for large $\epsilon$ since it matches the convergence rate of the empirical mean to the exact mean.

\begin{figure}
    \centering
    \includegraphics[width = 0.49\linewidth]{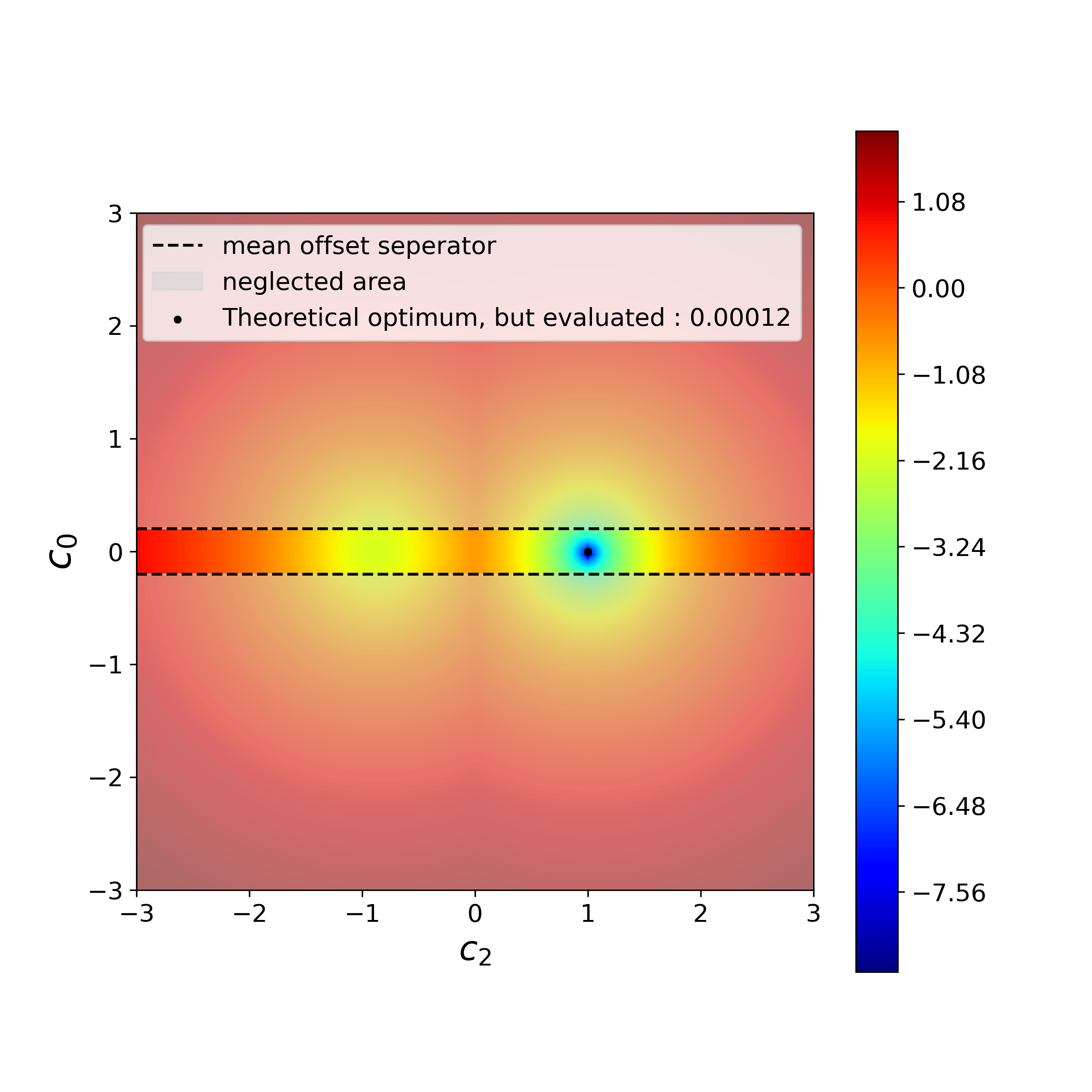}
    \includegraphics[width = 0.49\linewidth]{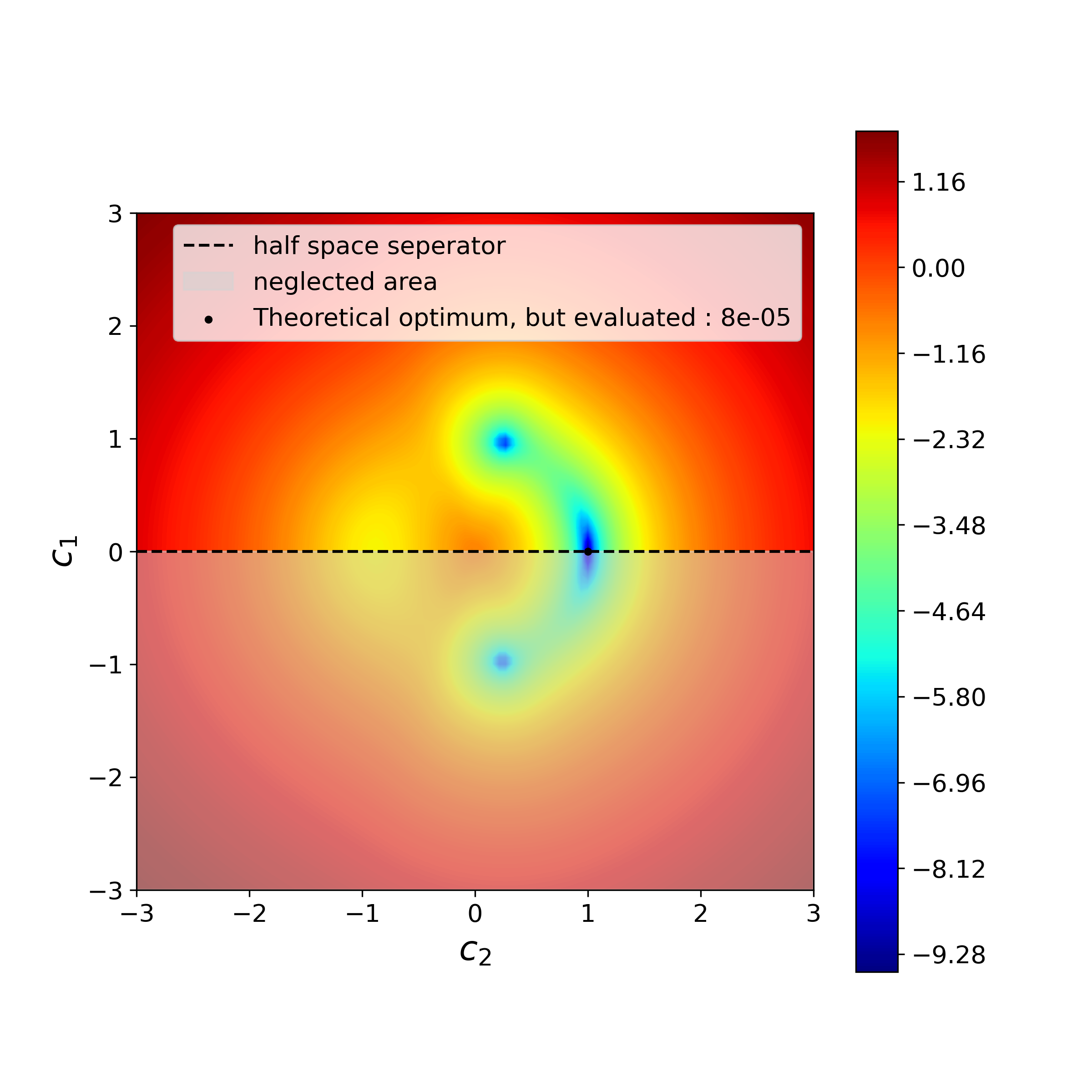}
    \caption{Plots of $\theta\mapsto\mathcal{S}_\epsilon(\mu_n,\nu_m[\theta])$ in log-scale for Example \ref{example:locMinima} (left) and Example \ref{example:manylocMinima} (right) showing the presence of multiple local minima and the effect of neglecting areas in the parameter domain motivated by Proposition \ref{prop:oddfunction} and \ref{prop:mean}.
    }
    \label{fig:localminiCuttoff}
\end{figure}

\newpage
\section{Numerical experiments}
\label{sec:experiments}

In this section we illustrate the performance of the approach presented above with some high-dimensional applications.
 Model classes based on polynomial chaos, its multi-element extension and a compression with the STT format are used.

First, we examine the multimodal random variables where we point out the necessity of the non-continuous model class to represent isolated accumulation of probability masses. 
Second, a simple random field discretization is carried out as a toy example to represent random fields with non-linear but smooth dependence of the stochastic input variable. 
Finally, from a non-periodic random micro-material, an effective upscaled piecewise constant random macro-material is constructed as porosity coefficient of a diffusion problem.

\begin{algorithm}
\caption{Fitting of $Y\apprxd \mathcal{M}(X)$}\label{alg:sinkhornopt}
\begin{algorithmic}
\renewcommand{\algorithmicrequire}{\textbf{Input:}}
\renewcommand{\algorithmicensure}{\textbf{Output:}}
\Require 
$\left\{
\begin{array}{lcl}
 \mu_n \text{ or }\left(Y^i\right)_{i=1}^n\subset \mathbb{R}^N, N\in\mathbb{N}, & \phantom{==} & \triangleright\textit{ data samples} \\
 X\sim \mathcal{D}(\mathbb{R}^M), M\in\mathbb{N},  &        & \triangleright\textit{ input stochastic coordinate system} \\
 m\in\mathbb{N},& & \triangleright\textit{ number of samples drawn from } X, \\
 \mathcal{M}(X)=\mathcal{M}[\theta](X) \text{ based on } \{P_\alpha^s\},,     &      & \triangleright\textit{ model class}    \\
 (\mu,\nu)\mapsto \mathcal{S}_{\epsilon}(\mu,\nu)\in\mathbb{R}_+,& & \triangleright\textit{ Sinkhorn loss operator} \\
 (\theta_0, \ell)\mapsto \theta^\ast = \mathcal{O}(\theta_0,\ell), &  & \triangleright\textit{ local optimizer} \\
\end{array}
\right.$
\Ensure Trained model class with parameter $\theta^\ast$ such that $\mathcal{M}[\theta^\ast](X)\apprxd Y$.\\
\State  Compute base samples $\left(X^j\right)_{j=1}^m\subset\mathbb{R}^M$.
\State Precompute evaluations $P_\alpha^s(X)$ for $s=1,\ldots,S, \alpha \in \Lambda$.
\State  Define $\theta \mapsto \mathcal{M}[\theta](X)$ which defines $\theta\mapsto \nu_m[\theta]$.
\State Set up the loss functional $\theta \mapsto \ell(\theta) := \mathcal{S}_\epsilon(\mu_n,\nu_m[\theta])$.
\State Minimize $\ell(\theta)$ using restarted BFGS and obtain $\theta^\ast$. 
\end{algorithmic}
\end{algorithm}

For the numerical realization, several open source software packages are used.
The python package \verb!geomloss! \cite{feydy2019interpolating} provides the calculation of the debiased Sinkhorn loss and gradient information.
The minimization with automatic differentiation is carried out with \verb!pytorch-minimize!.
The compressed model class is implemented with our python package \verb!TensorTrain!~\cite{tensortrain} while multi-element polynomial chaos and sample generation is realized with \verb!ALEA!~\cite{alea}.
For the numerical upscaling experiment, the composite structures are generated with our library \verb!Bubbles!~\cite{bubbles} and the numerical solution process of the corresponding partial differential equations is realized using \verb!FEniCS!~\cite{fenics}.
Finally, \verb!seaborn!~\cite{seaborn} is used throughout this work for generating plots of the obtained numerical results.

\subsection{Multimodal distribution}
Multimodal distributions especially pose a challenge when representing random variables explicitly.
In this section, we consider multimodal random variables $Y$ modeled as a mixture of Gaussian distributions in $N=1$ and $N=2$ dimensions.
We consider two cases, in the following denoted as \textit{weakly} or \textit{strongly separated}.
By weakly separated we mean multimodal probability masses such that the mass in the transition area between the modes is not negligible, see Figure~\ref{fig:weakmultimodality} for an illustration.
Strongly separated then refers to isolated accumulations of multimodal distributions, see the top right box in Figure~\ref{fig:disjointMultimodalNumeric4Bumps} for an example.

\subsubsection{Weakly separated multimodality}
In the first validation experiment, we consider the case of a weak separated multimodal behavior, i.e. the modes are not fully separated.
Let $T$ be a any diffeomorphic transport map with Lipschitz constant $L>0$ such that $Y = T(X)$ for some random variable $X$ with connected compact image.
Note that as modes separate further and the connection becomes ``thinner'', the Lipschitz constant of the map grows larger unboundedly. 

We consider a one-dimensional random variable $Y$ as a mixture of the Gaussian distributions $\mathcal{N}(-2, 1)$ and $\mathcal{N}(1,0.5)$, \textit{i.e.} $N=1$.
Assume the reference coordinate system $X\sim\mathcal{U}\left([-1,1]^2\right)$ and recall the STT format from Section~\ref{sec:pcett} with global Legendre polynomials of various degree as model class.
In this setting, $M=2$ and the model class does not coincide with (an approximation of) a classical transport map.
We motivate the choice of $M>N$ to allow for a compressed representation of the functional representation $\mathcal{M}(X)$.
In particular in the computation with $M=N$, a polynomial degree of $81$ is required to obtain similar results as in the right plot of Figure~\ref{fig:weakmultimodality}.
This has to be compared to the $M(d+1)r$ degrees of freedom in the compressed format, where $d$ denotes the polynomial degree and $r$ denotes the rank of the STT. 

Figure~\ref{fig:weakmultimodality} illustrates the resulting fitted random variable with respect to the debiased Sinkhorn loss $\mathcal{S}_\epsilon$ with $\epsilon = 0.05$.
We note that the model class is challenged by accurately representing the ``valley'' between the modes.
It can be observed that the approximation quality increases only slightly in this area as the polynomial degree increases.
\begin{figure}
    \centering
    \includegraphics[width = 0.49\linewidth]{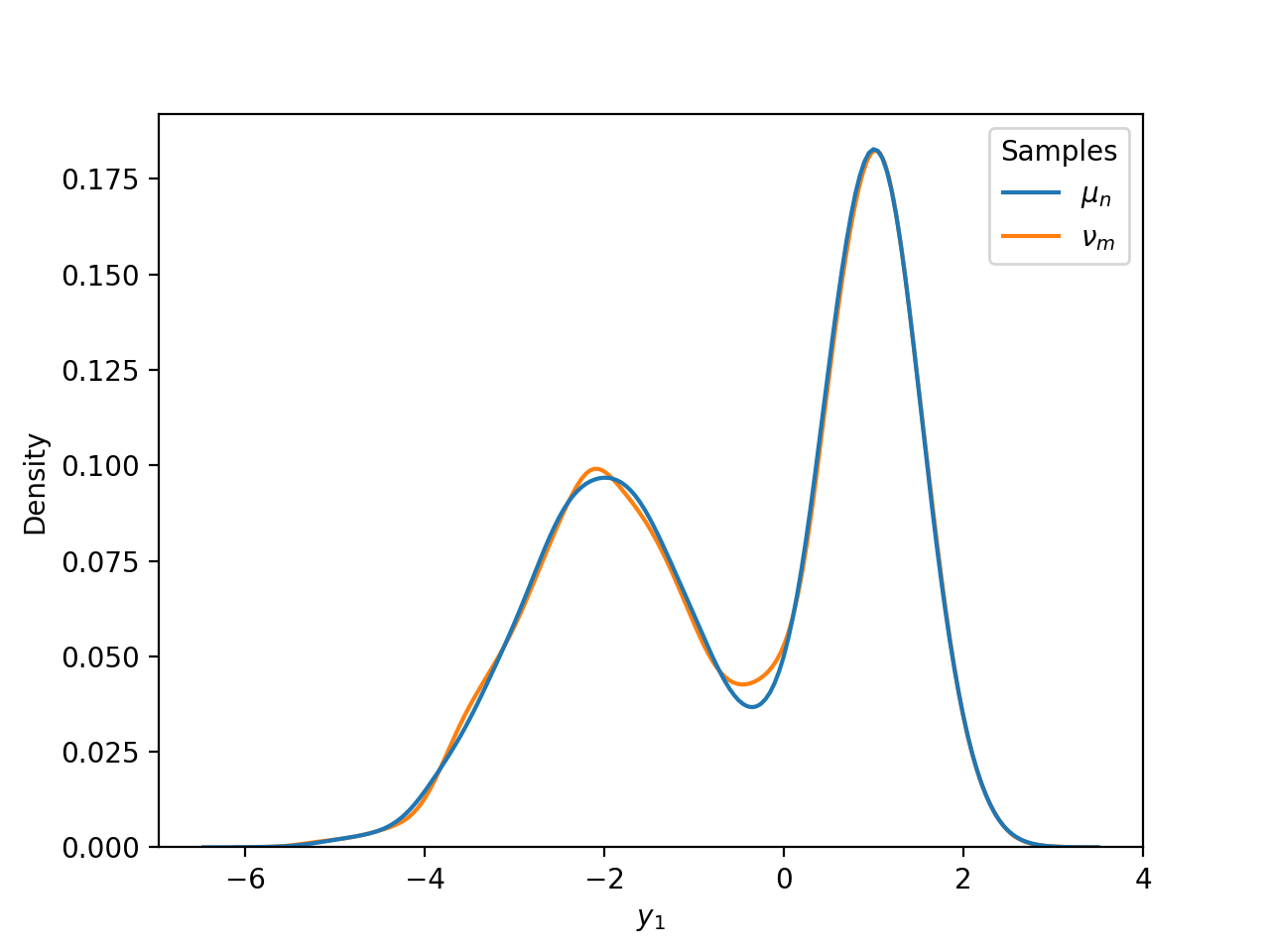}
    \includegraphics[width = 0.49\linewidth]{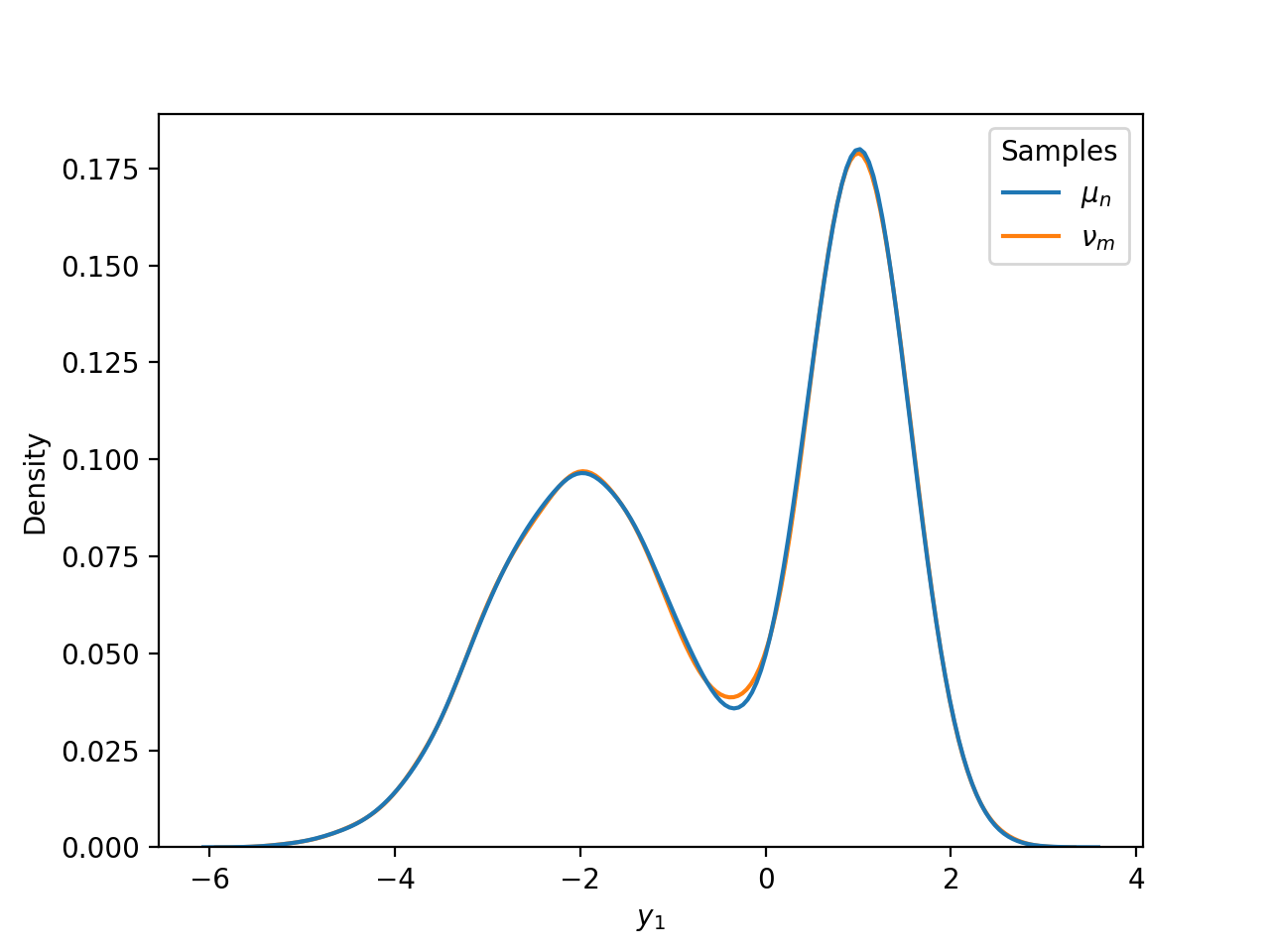}
    \caption{STT fitting of a multimodal distribution. Reference coordinate system $X=(U_1,U_2)$ with $U_1,U_2\sim \mathcal{U}(-1,1)$ \textit{iid} using tensorized Legendre chaos with STT rank $r=(1,3,1)$ based on $K=\num{1e+05}$ samples.
    \textbf{Left:} Polynomial degree $4$ with a minimized debiased Sinkhorn loss of \num{2.0e-04}.
    \textbf{Right:} Polynomial degree $8$ with a minimized debiased Sinkhorn loss of \num{3.6e-05}.
    }
    \label{fig:weakmultimodality}
\end{figure}

\subsubsection{Strongly separated multimodality}
When considering multimodal densities, for disjoint multimodalities with a stochastic input coordinate system $X$ and connected image, the ``transport map'' inevitably exhibits discontinuities to split the separated probability mass as mentioned in Section~\ref{sec:multielementpce}

As a model problem for $Y$ we again consider a mixture of Gaussian distributions.
For a number $B\in\mathbb{N}$ of modes, let
$$
 \Theta[B] = \left\{ 0,\ldots, \frac{2\pi B}{B-1} \right\}.
$$
Then $Y$ is defined as mixture of $$\mathcal{N}(\mu_{S,\theta}, \sigma_B^2I_2), \quad \mu_{S,\theta} = (S\cos(\theta),S\sin(\theta))^T, \quad \theta\in \Theta[B],$$
with shift parameter $S>0$ and variance $\sigma_B^2>0$ depending on $B$ to ensure disconnection. 

As a first setup we consider $B=4$ with $S=6$ and $\sigma_B^2=1$ based on a partition of $[-1,1]^2$ into $2\times 2$ squares.
The numerical results are shown in Figure~\ref{fig:disjointMultimodalNumeric4Bumps}.
The second setup involves $B=8$ modes using a partition of $[-1,1]^2$ into $2\times 4$ squares with shift $S=6$ and smaller variance $\sigma_B^2=0.1$.

The results are visualized by means of the kernel density estimation in the \verb! seaborn ! package.
Motivated by Proposition~\ref{prop:mean}, the degrees of freedom associated to the (local) mean values are set as start value within the iterative minimizing process, while the remaining values are initialized randomly with standard normal distribution.
In particular, we set the degrees of freedom associated with polynomial coefficients for the constant contributions to the means $\mu_{S,\theta}$.
All other coefficients are randomly initialized \textit{iid} Gaussian with mean $0$ and variance $0.01$.

\begin{figure}
    \centering
    \includegraphics[width = 0.7\linewidth]{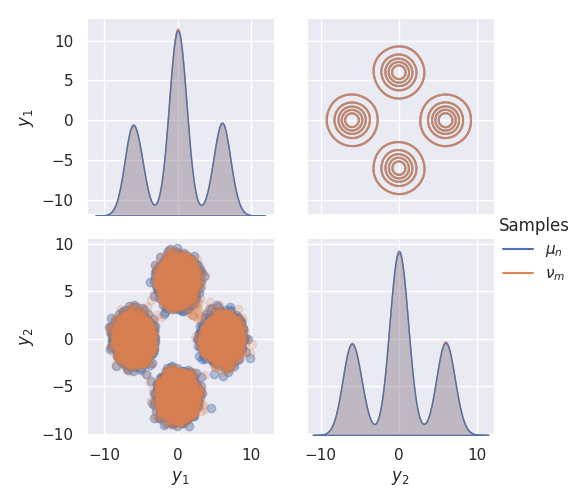}
    \caption{Verification of a multi-element polynomial chaos model class $\mathcal{M}$ with stochastic reference coordinate system $X=\mathcal{U}\left([-1,1]^2\right)$ with $2\times 2$ elements.
    The empirical measures are based on $n=m=\num{1.6e+4}$ samples. Each local polynomial is a tensor product of orthonormal polynomials of degree $7$.
    The final debiased Sinkhorn loss $\mathcal{S}_{\epsilon}$ with $\epsilon = 0.05$ is given by  $\mathcal{S}_{\epsilon}(\mu_n,\nu_m)\approx \num{6e-03}$.
    }
    \label{fig:disjointMultimodalNumeric4Bumps}
\end{figure}

\begin{figure}
    \centering
     \includegraphics[width = 0.7\linewidth]{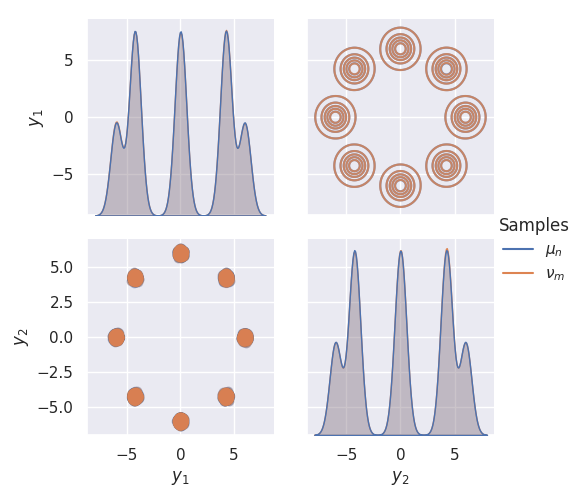}
    \caption{Verification of a multi-element polynomial chaos model class $\mathcal{M}$ with stochastic reference coordinate system $X=\mathcal{U}\left([-1,1]^2\right)$ with $2\times 4$ elements.
    The empirical measures are based on $n=m=\num{3.2e+4}$ samples.
    Each local polynomial is a tensor product of orthonormal polynomials of degree $7$.
    The final debiased Sinkhorn loss $\mathcal{S}_{\epsilon}$ with $\epsilon = 0.05$ is given by  $\mathcal{S}_{\epsilon}(\mu_n,\nu_m)\approx \num{2e-03}$.}
    \label{fig:disjointMultimodalNumeric8Bumps}
\end{figure}

\subsection{Random field with STT}
We consider the random field
\begin{equation}
\label{eq:randomField}
\kappa(x,\omega) = \cos(U_1(\omega) x + U_2(\omega)) + U_3(\omega),\quad x\in[0,1],
\end{equation}
with \textit{iid} random variables $U_1,U_2,U_3\sim\mathcal{U}(-1,1)$.
For $N\in\mathbb{N}$, let $x_i = (i-1)/(N-1)$, $i=1,\ldots,N$ be a  discretization of $[0,1]$ and define the $\mathbb{R}^N$-valued random variable $Y = (Y_i)_i$ by
$$
Y_i = \kappa(x_i,\omega),\quad i=1,\ldots, N.
$$
Figures~\ref{fig:rfN5} and~\ref{fig:rfN10} depict the correlation of the marginals for $N=5$ and $N=10$, respectively.
The plots show the different levels of correlation intensity, which corresponds to the distance of the discretization points in $[0,1]^2$.
The model class successfully captures these details with very small STT rank of $(2,2)$ with a debiased Sinkhorn loss of $\num{7.8e-4}$.

\begin{figure}
    \centering
    \includegraphics[width = 0.9\linewidth]{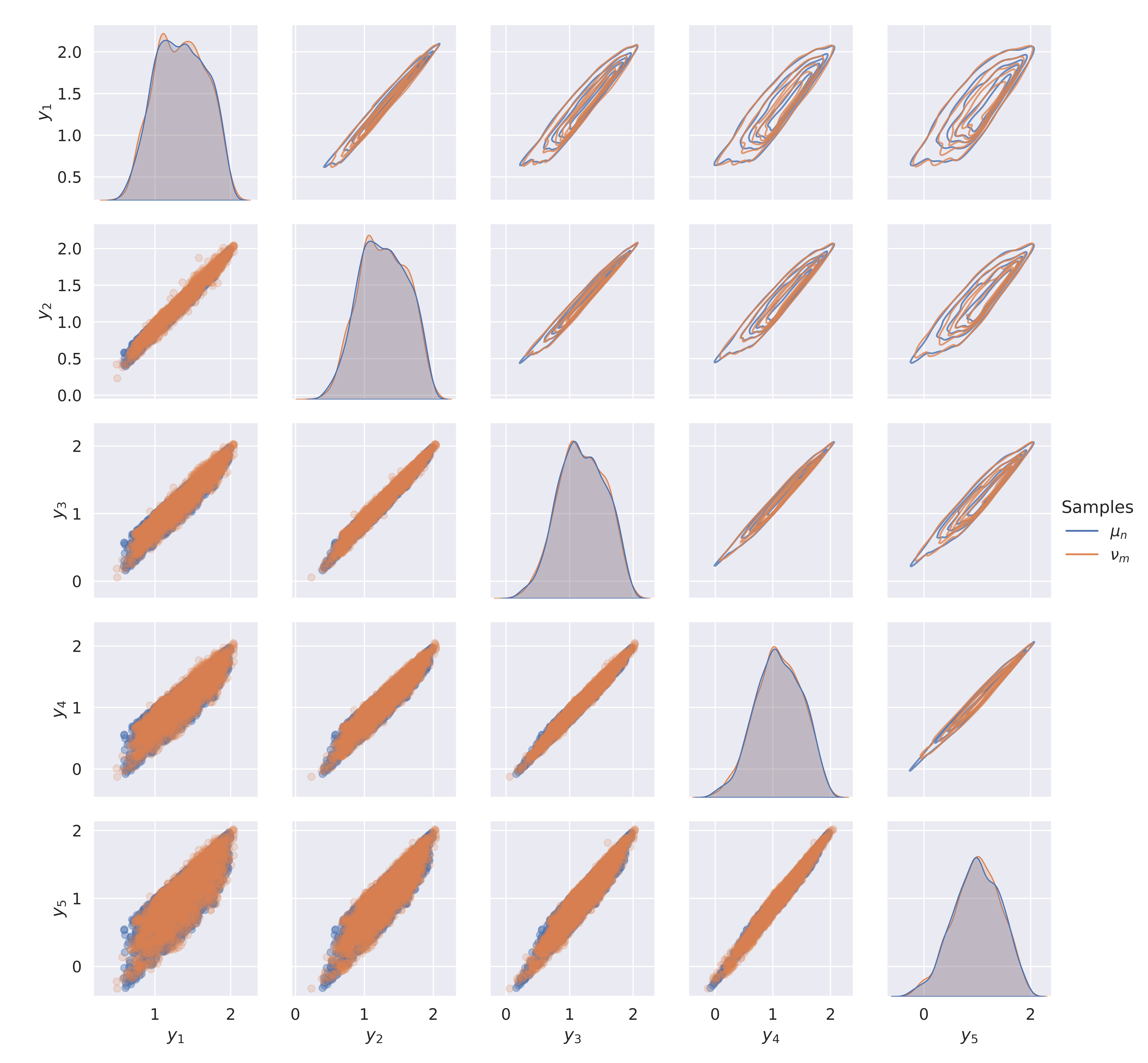}
    \caption{Marginals of the $N=5$ discretization points of the random field from \eqref{eq:randomField} with a STT of rank $r=(2,2)$ using tensorized Legendre polynomials of degree $(9,9,1)$ for $X\sim\mathcal{U}([-1,1]^3)$, \textit{i.e.} $M=3$ based on $n=m=\num{2e3}$ samples with $\epsilon = 0.05$ for the debiased Sinkhorn loss $\mathcal{S}_\epsilon(\mu_n,\nu_m)\approx\num{7.8e-4}$.}
    \label{fig:rfN5}
\end{figure}

% N = 10 case
\begin{figure}
    \centering
    \includegraphics[width=0.9\linewidth]{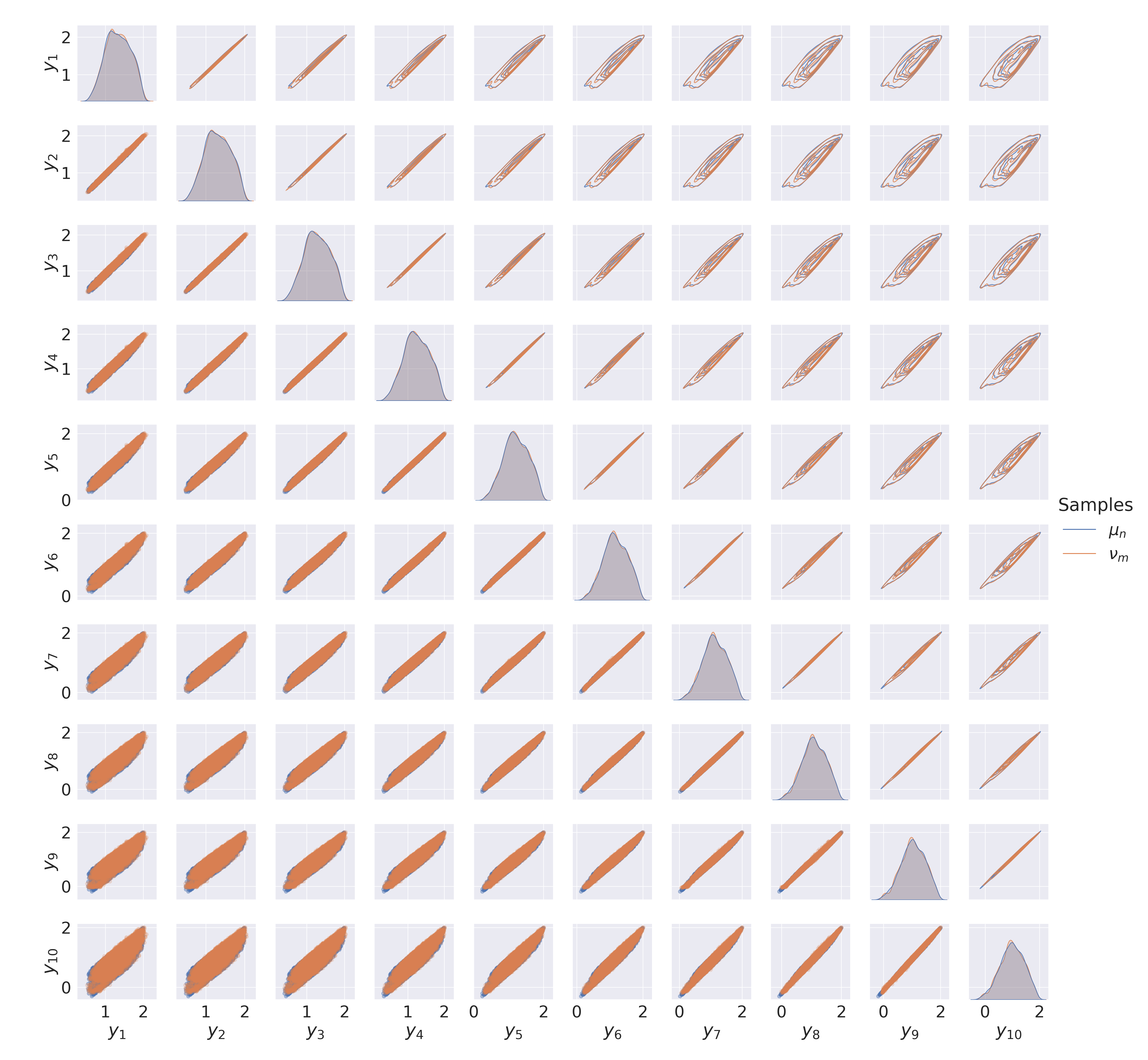}
    \caption{Marginals of the $N=10$ discretization points of the random field from \eqref{eq:randomField} with a STT of rank $r=(2,2)$ using tensorized Legendre polynomials of degree $(9,9,1)$ for $X\sim\mathcal{U}([-1,1]^3)$, \textit{i.e.} $M=3$ based on $n=m=\num{5e3}$ samples with $\epsilon = 0.05$ for the debiased Sinkhorn loss $\mathcal{S}_\epsilon(\mu_n,\nu_m)\approx\num{6.3e-4}$.}
    \label{fig:rfN10}
\end{figure}

\subsection{Application to numerical upscaling}
\label{sec:upscaling}

The motivation for the following application oriented experiment is the simulation with materials exhibiting random non-periodic microstructures as shown in Figure~\ref{fig:composite}.
Such computations pose significant challenges since the large number of inclusions results in a large set of stochastic dimensions.
This is due to the assumed parametrization, which determines the location as well as the shape of these inclusions.
For a statistical analysis of problems with such data, one would have to consider a large number of realizations for accurate results, which becomes computationally expensive due to the microscopic structures that have to be resolved.

An alternative way we pursue here is to determine an effective macroscopic (``upscaled'') random field which leads to the same statistical system response for some quantity of interest.
In particular, we use an approach of ``numerical upscaling'' to obtain (samples of) a macroscopic material description.
As a result, we derive a functional representation of a \textit{piecewise constant random field} as upscaled stochastic information.
Note that this is in contrast to constant effective material descriptions, which are usually obtained from (stochastic) homogenization methods where information about stochastic fluctuations is lost for subsequent numerical computations. 

The idea of functional representation of upscaled material is inspired by and extends the works in~\cite{soize2010identification} based on an optimization with maximum liklihood estimators (MLE) and~\cite{sarfaraz2018stochastic} where a functional representation is obtained by means of Kalman filters in a Bayesian context.
Here we obtain a functional representation by minimizing the debiased Sinkhorn divergence that metricizes the space of probability distributions.
As a result, based on the chosen model class we are able to fit random variables that are close to the observed samples in the sense of distribution.
This is fundamentally different from the functional representation obtained in~\cite{soize2010identification}, which uses a tensorized approximation of the MLE, relying on an approximation of uncorrelated random variables, and the representation in~\cite{sarfaraz2018stochastic}, where the resulting random variable has matching mean and variance only.

We consider a Lipschitz domain $D\subset\mathbb{R}^2$ on which a random composite field $\bm{\kappa} \in L^2(\Omega, \sigma, \mathbb{P}; L^\infty(D))$ is defined.
This random field represents a model for random composite materials.
Figure~\ref{fig:composite} depicts example realizations.
The random material is described as $2$-phase matrix composite material consisting of a matrix and inclusions.
Each inclusion is a star-shaped domain, \textit{i.e.} for $\kappa_0,\kappa_1\in\mathbb{R}_+$ 
$$\kappa(x, \omega) =
\left\{
\begin{array}{rcl}
\kappa_0 &=& x \in D_\text{incl}(\omega),\\
\kappa_1 &=& x \in D\setminus D(\omega), 
\end{array}
\right.
$$
for a random domain $D_\text{incl}(\omega)\subset D$ to be specified below. 
A sample of the random domain $D_\text{incl}$ is drawn as a set of star-shaped random inclusions, which are parametrized by their boundary description and correlated by geometric constraints.
Specifically, we assume that the set of inclusions is separated in the sense that the set of convex hulls of the polygonal approximations of the inclusions does not collide for a given prescribed discretization. 

Each realization of a random inclusion $\mathcal{I}(\omega)$ is given with an interface $\partial\mathcal{I}(\omega)$ parametrized as
$$
\partial\mathcal{I}(\omega) = \left\{P(\omega) +  \rho(\theta,\omega) ( \cos\theta, \sin\theta)^T, \theta\in[0,2\pi] \right\}\subset \mathbb{R}^2
$$
with center point $P \sim \mathcal{U}(D)$ and radius $\rho$ given as random field
$$\rho
(\theta, \omega) = \rho_0(\omega) \exp\left( \sum\limits_{\ell=1}^5 a_\ell(\omega) \sin(\ell \theta) + b_\ell(\omega) \cos(\ell \theta) \right)
$$
with $\rho_0\sim\mathcal{U}(0.01, 0.1)$ such that for the realization $\rho_0(\omega)$ we choose \textit{i.i.d.} $a_\ell,b_\ell\sim \mathcal{U}(-\rho_0(\omega), \rho_0(\omega))$ for $\ell=1,\ldots,5$.
Then a realization of $\kappa$ is obtained by successively adding or rejecting of up to $40$ inclusions that satisfy the geometric constraint. 

Subsequently, we consider a numerical upscaling scheme as used in~\cite{vasilyeva2021machine}.
Let $\delta D = \Gamma_{\mathrm{D}} \overset{\cdot}{\cup} \Gamma_{\mathrm{N}}$ with $|\Gamma_{\mathrm{D}}|>0$ for disjoint Dirichlet boundary $\Gamma_{\mathrm{D}}$ and Neumann boundary segment $\Gamma_{\mathrm{N}}$, respectively.
We consider a random partial differential equation given as
\begin{align}
\label{eq:darcyproblem}
\left\{
\begin{array}{rcll}
    -\operatorname{div} \mathcal{A}(x,\omega) \nabla \mathcal{U}(x,\omega) & = & f(x)  & \text{a.s. in } D\times \Omega, \\
                                                     \mathcal{U}(x,\omega) & = & g(x)  & \text{a.s. in } \Gamma_{\mathrm{D}}\times \Omega , \\
              \mathcal{A}(x,\omega) \partial_{\bm{n}}[\mathcal{U}](x,\omega) & = & h(x)  & \text{a.s. in }  \Gamma_{\mathrm{N}}\times \Omega,
\end{array}
\right.
\end{align}
with deterministic sufficiently regular data $f,g,h$ such that there exists a unique weak solution $\mathcal{U}(\cdot,\omega)$ $\mathbb{P}$-almost everywhere for a given realization of a random field $\mathcal{A}(\cdot,\omega)$ to be specified. 
Let $N_s\in\mathbb{N}$ and consider a disjoint decomposition $D= \bigcup_{s=1}^{N_s} D_s $.
For $\omega\in\Omega$ and on each $D_s$ choosing $\mathcal{A}=\kappa$, we solve the following auxiliary Dirichlet problems for $j=1,2$ on the micro scale,
\begin{align}
\label{eq:darcyproblem_micro}
\left\{
\begin{array}{rcll}
    -\operatorname{div} \mathcal{\kappa}(x,\omega) \nabla u_j(x,\omega) & = & 0  & \text{a.s. in } D_s\times \Omega, \\
                         u_j(x,\omega) & = & x_j  & \text{a.s. in } \partial D_s\times \Omega.
\end{array}
\right.
\end{align}
We refer to Figure~\ref{fig:composite} for an illustration of a partition for different realizations of the microscopic random field $\kappa$.
The effective macroscopic random permeability tensor field $K$ is assumed to be piecewise constant in $x$ with respect to the partition $\{D_s\}$, given by
$$
 [K(x,\omega)|_{D_s}]_{ij} \equiv K_{ij}^s(\omega):= \frac{1}{|D_s|}\int\limits_{D_s} \kappa(x,\omega)
 \frac{\partial u_i}{\partial x_j}\mathrm{d}x,\quad i,j = 1,2.
$$
Assuming that $K$ is symmetric and possibly anisotropic on $D_s$ for each $s=1,\ldots,N_s$, we can encode $K$ as a random vector $Y$ with values in $\mathbb{R}^{3N_s}$ and realization given by
\begin{equation}
\label{eq:Yupscaling}
Y(\omega) = [ K_{11}^1, K_{22}^1, K_{12}^1, \ldots, K_{11}^{N_s}, K_{22}^{N_s}, K_{12}^{N_s}](\omega)^T.
\end{equation}
While we can draw samples of $Y$ representing the upscaled random tensor field $K$ with the above concept, the actual distribution is unknown.
This motivates the functional representation of $Y$, which then enables to generate new samples very efficiently. 

For the numerical investigation we considered two cases with $N_s=1$ and $N_s=4$ in $D=[0,1]^2$ and partition as illustrated in Figure~\ref{fig:composite}.
The corresponding output random variable $Y$ from~\eqref{eq:Yupscaling} representing the anisotropic behaviour thus is $N=3$ and $N=12$ dimensional, respectively. 
\begin{figure}
    \centering
    \includegraphics[width = 0.9\linewidth]{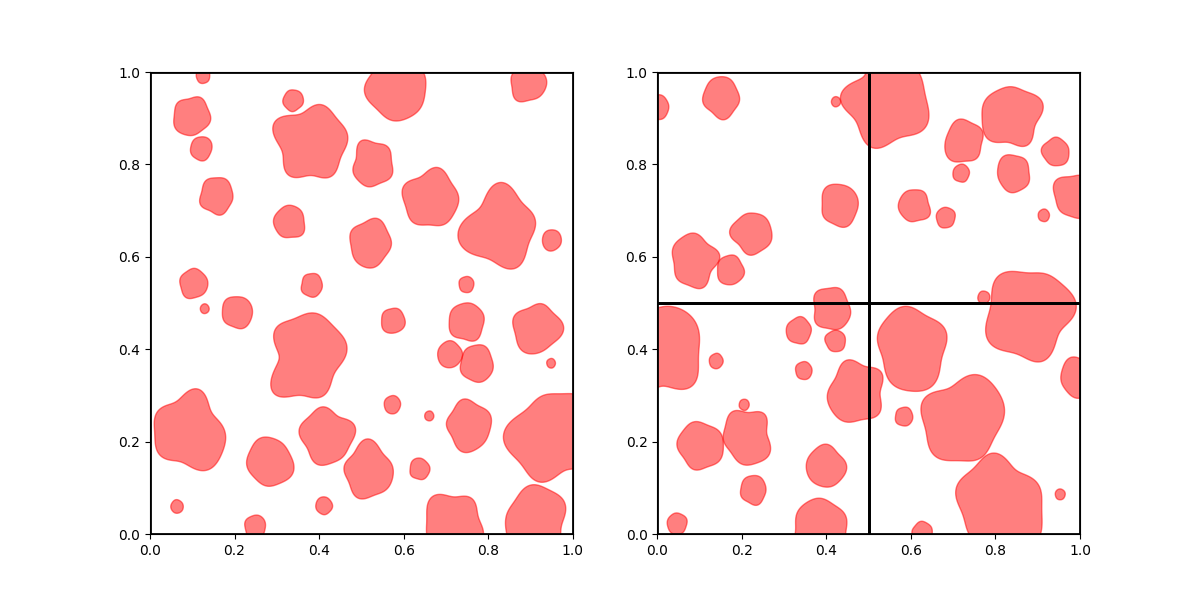}
    \caption{\textbf{Left:} Random composite sample with $N_s=1$ partition for the upscaling process. \textbf{Right:} Random composite sample with $N_s=4$ partition (black grid) used in the upscaling scheme.}
    \label{fig:composite}
\end{figure}

The associated coefficient tensor is represented as a STT with uniform rank $\bm{r}\equiv 6$.
The result of the fitting procedure with debiased Sinkhorn loss of $\num{2.4e-5}$ is shown in Figure~\ref{fig:upscalingsinkhornNs1} as corner plots. 
\begin{figure}
    \centering
    \includegraphics[width=0.9\linewidth]{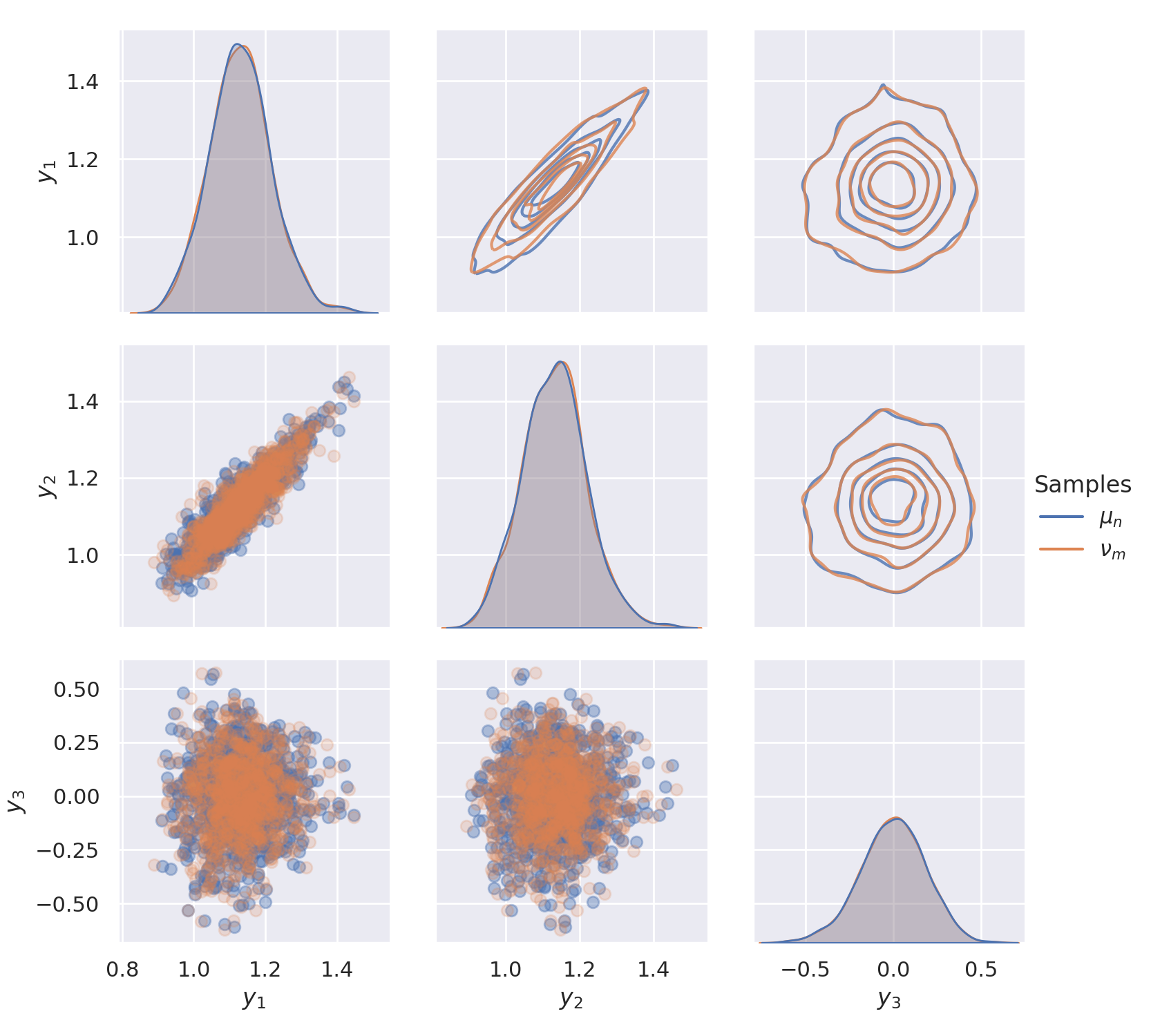}
    \caption{
    Upscaling experiment with $N_s=1$, $N=3$, $M=2$, polynomial degree $9$, and STT rank $\bm{r}=6$ with a resulting debiased Sinkhorn divergence of $\num{2.4e-5}$. Here, $Y1$ and $Y2$ are scaled with $0.1$ to improve the optimization performance.}
    \label{fig:upscalingsinkhornNs1}
\end{figure}

For the second case of $N_s=4$ with $\operatorname{img}(Y)\subset\mathbb{R}^{12}$, we use a stochastic reference coordinate system $X$ with $M=3$. 
Figure~\ref{fig:upscalingsinkhornNs4} depicts corner plots of the data samples of $Y$, showing strong correlation structure only locally that is associated with the diagonal entries of the upscaled anisotropic tensor.
To bridge the dimensional gap between $N=12$ and $M=3$, we allow flexible non-linear behaviour in terms of a large polynomial degree of $15$ and a STT rank of $(9,9)$.
The resulting final debiased Sinkhorn loss is about $\num{9e-3}$.
In contrast to the $N_s=1$ case, we observe a slightly larger deviation between the data and the model class distribution fit for the resulting marginals.

\begin{figure}
    \centering
    \includegraphics[width=0.95\linewidth]{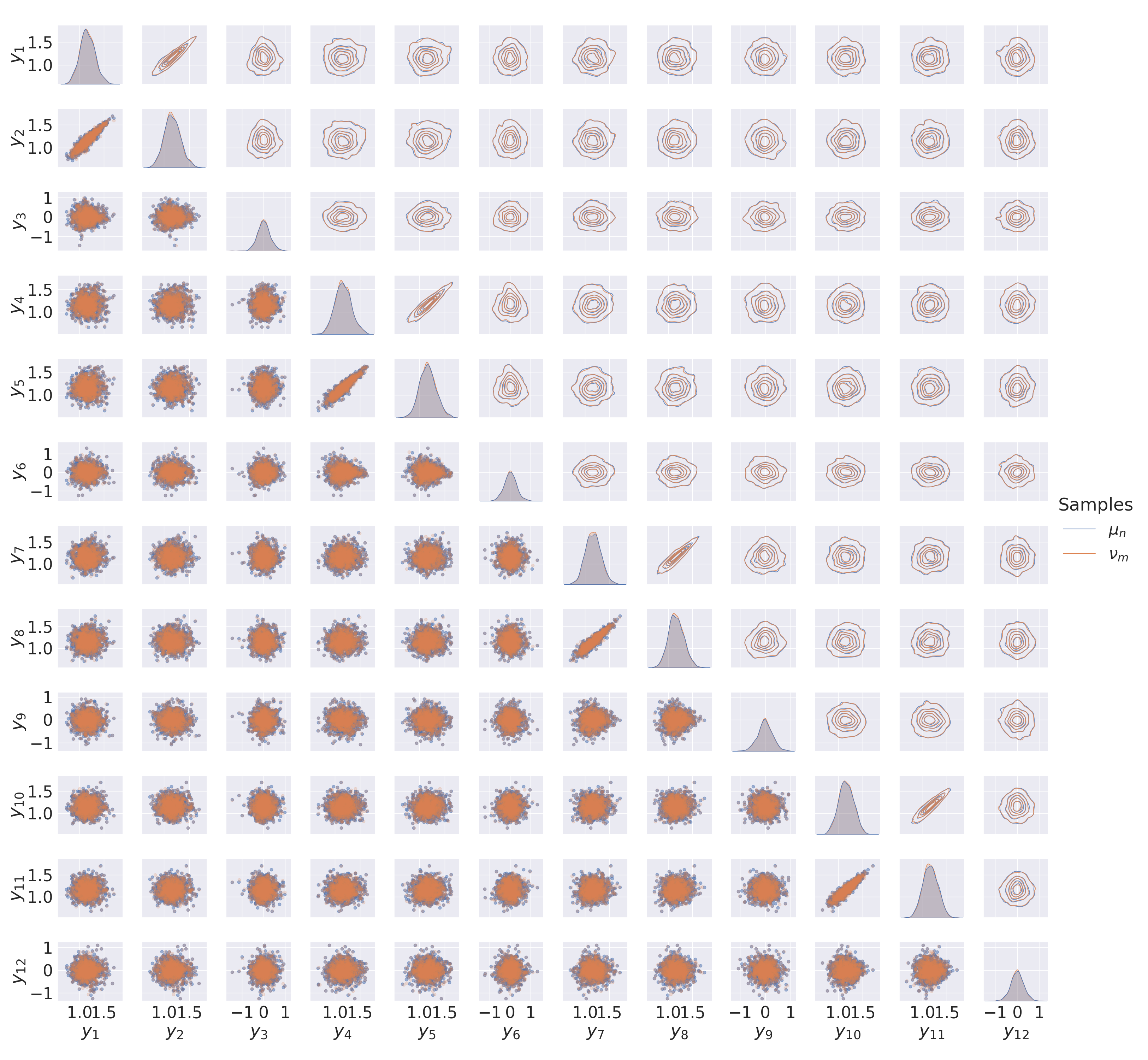}
    \caption{
    Upscaling experiment with $N_s=4$, $N=12$, $M=3$, polynomial degree $15$, and STT rank $(9,9)$ with a debiased Sinkhorn divergence of $\num{9e-3}$. $Y_k$ is scaled by $0.1$ for $k\mod 3 \equiv 1$ or $k\mod 3 \equiv 2$.}
    \label{fig:upscalingsinkhornNs4}
\end{figure}

Based on the obtained upscaled macroscopic random field $K$, we investigate the quality of the model class in terms of propagation and generalization.
For this, we consider the differential equation~\eqref{eq:darcyproblem} with $\mathcal{A} = K$.
Let $\mathcal{U}(\omega)$ denote the weak solution of~\eqref{eq:darcyproblem} for a realization of the random field $K(\omega)$.
Moreover, define the quantity of interest $q\colon H^1(D)\to \mathbb{R}$ as the spatial mean value
$$
    q(u) := \frac{1}{|D|}\int\limits_{D} v\mathrm{d}x,\quad \forall u\in H^1(D).
$$
This in turn defines the random variable $\overline{u}(\omega):=q(\mathcal{U}(\omega))$ that we use to assess the accuracy of the statistical properties of the computed macroscopic field.

We now propagate three different types of samples of $K$ through the Darcy problem (1) for one subdomain $N_s=1$ yielding $N=3$, and (2) for a decomposition into 4 subdomains $N_s=4$ yielding $N=12$, respectively.
First, the data samples of $Y$ are used to define samples of $K$ and thus samples of $\overline{u}$.
We refer to these samples as \textit{reference samples} propagated through $q$.
Second, the underlying samples of $X$ used to fit the model class $\mathcal{M}$ are propagated through $\mathcal{M}[\theta^\ast]$, yielding approximate samples of $Y$.
These are referred to as \textit{model fitted samples} propagated through $q$.
Third, new samples of the underlying reference coordinate system $X$ are drawn, mapped through the fit in the model class $\mathcal{M}[\theta^\ast]$ and eventually get propagated through $q$. 
Figure~\ref{fig:propagation} shows the results of these experiments.

On the one hand, we observe that the samples used to optimize the surrogate in the model class produce results that are very close to the propagated data sample distribution for both scenarios with $N_s=1$ and $N_s=4$, which is to be expected.
On the other hand, the propagation of newly generated samples results in a deviation of the resulting distribution for $N_s=4$, while there is only sample noise deviation for $N_s=1$.
Table~\ref{table:propagatedMoments1} and ~\ref{table:propagatedMoments2} depict the impact and strength of the deviation in terms of mean and variance.
A plausible explanation for this deviation is the very small number of samples used and the insufficient expressiveness of the model class $\mathcal{M}$ to ``explain'' the data sample distribution for $N_s=4$ with a resulting debiased Sinkhorn loss of $\num{2.9e-3}$ only, compared to the $\num{2.4e-5}$ loss value for $N_s=1$.

\begin{table}[ht]
%\centering
\begin{tabular}{lccc}
\toprule
       & \multicolumn{3}{c}{samples}  \\
        \cmidrule{2-4}
&{\color{blue!50!white}{\Large\textbf{---}}}& 
{\color{orange}{\Large\textbf{---}}}&
{\color{green!50!black}{\Large\textbf{---}}}\\
\midrule
mean     &\num{7.41e-2} & \num{7.41e-2} & \num{7.43e-2}
      \\
variance     & \num{3.24e-5} &
     \num{3.18e-5} &
     \num{3.28e-5}\\
     \bottomrule
\end{tabular}
\centering
\caption{First and second order statistics of the $3$ resulting distributions displayed in Figure~\ref{fig:propagation}.  Case of one subdomain $N_s =1$.}
\label{table:propagatedMoments1}
\end{table}

\begin{table}[ht]
\begin{tabular}{lccc}
\toprule
       & \multicolumn{3}{c}{samples}  \\
        \cmidrule{2-4}
&{\color{blue!50!white}{\Large\textbf{---}}}& 
{\color{orange}{\Large\textbf{---}}}&
{\color{green!50!black}{\Large\textbf{---}}}\\
\midrule
mean     &\num{7.26e-2} & \num{7.25e-2} & \num{7.55e-2}
      \\
variance     & \num{3.75e-5} &
     \num{3.53e-5} &
     \num{6.11e-5}\\
     \bottomrule
\end{tabular}
\centering
\caption{First and second order statistics of the $3$ resulting distributions displayed in Figure~\ref{fig:propagation}. Case of $4$ subdomains $N_s=4$.}
\label{table:propagatedMoments2}
\end{table}

\begin{figure}
    \centering
    \includegraphics[width =0.49\linewidth]{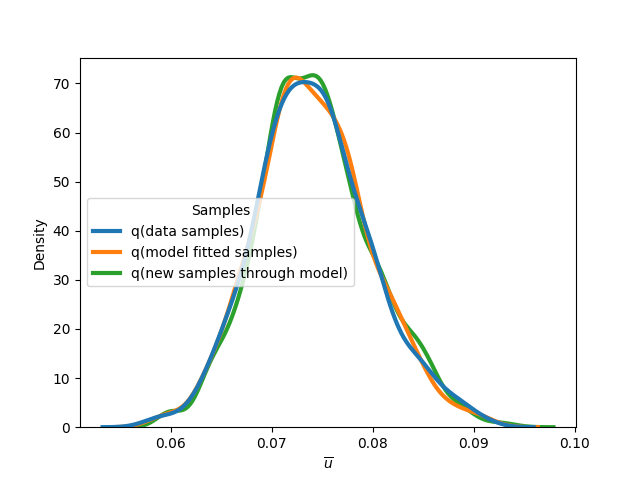}
    \includegraphics[width=0.49\linewidth]{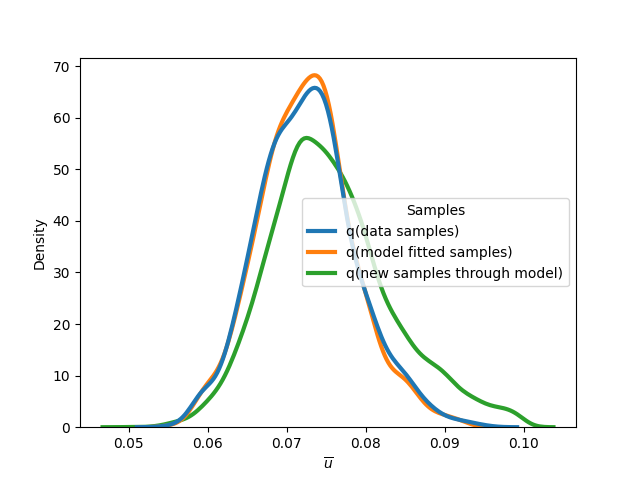}
    \caption{KDE plots of $q(\mathcal{Y})$ for three sets of samples of $\mathcal{Y}$:
    $\num{1e4}$ data samples of $Y$, $\num{1e4}$ propagated base samples through $\mathcal{M}$ and $\num{1e5}$ new samples drawn from $X$ again propagated through $\mathcal{M}$ are used to obtain samples from $\overline{u}$. \textbf{Left}: Case of one subdomain $N_s =1$. \textbf{Right}: Case of $4$ subdomains $N_s=4$.
    }
    \label{fig:propagation}
\end{figure}

%\newpage
\section{Conclusion}
\label{sec:conclusion}

In this work we consider a very flexible computational method to obtain functional representations of random variables that are determined by samples only without any knowledge of the underlying distribution.
The random variables (or fields) are expanded in a polynomial basis and optimized in an unsupervised way by means of debiased Sinkhorn losses that leads to a fitting of the discrete source and target measures.
In order to represent the uncertainty in the approximation, we use a multi-element polynomial chaos expansion as model class $\mathcal{M}$.
This high-dimensional representation is compressed with a possibly very efficient low-rank compression STT format.
The definition of (polynomial chaos) basis functions relies on the choice of the underlying stochastic input or reference coordinate system denoted as $X$, which is an $M$-dimensional random vector.
Since the target random vector $Y$ is $N$-dimensional, the model class $\mathcal{M}$ representation can be seen a relaxation of classical transport structures for which $N=M$ and $\mathcal{M}$ has to be a diffeomorphism.
We show such a relaxation in particular to discontinuous probability mass transport is necessary to obtain accurate functional representations of multi-modal measures.

While the optimization problem is non-convex and non-linear, the involved smoothness and efficient evaluation of the debiased Sinkhorn loss allows for fast computation of an accurate fit of measures through the parameter dependent model class in a distributional sense, based on second order schemes like BFGS or Hessian supported Newton schemes. 

The developed technique is applied to several common challenging tasks in UQ.
This includes the representation of a multi-modal distribution and an example of a smooth random field.
Furthermore, a numerical stochastic upscaling scheme with functional representation of the macroscale random field is developed.
This can be understood as a new approach within the framework laid out in~\cite{soize2010identification,sarfaraz2018stochastic,bigoni2019greedy}, where functional representations are obtained based on tensorized maximum likelihood losses, Kalman filtering in a Bayesian framework and optimization of Kullback-Leibner losses, respectively.
It extends these works in the sense that the new technique allows for a fitting in distribution, whereas only finite moment fits are obtained in~\cite{sarfaraz2018stochastic} or simplified approximations of maximum likelihood estimators of uncorrelated random variables are performed in~\cite{soize2010identification}, thus possibly ignoring any correlation structure between components of the samples.

As an outlook, the proposed unsupervised functional representation scheme can be applied and extended in various directions.
A first extension concerns the used metric $\mathrm{d}$ in the definition of the Wasserstein metric and in turn of the debiased Sinkhorn divergence.
While in this work we considered the Euclidean norm, choosing $\mathrm{d}$ as a Sobolev norm allows to learn random fields arising in random partial differential equations in distribution.
This idea extends the method presented in~\cite{eigel2019variational} based on empirical expectation losses.

A second extensions is related to Bayesian inference.
Here, we aim to obtain posterior information encoded in a functional approximation of a random variable as in~\cite{sarfaraz2018stochastic}, which corresponds to $N=M$ in our setting.
This representation allows the incoorperation of posterior information in follow-up computations based on samples or sampling free methods such as stochastic Galerkin schemes.

Third, we recall that the optimization subject to debiased Sinkhorn loss leads to a fit in the model class to the data sample distribution.
However, there might be circumstances where more statistical information of the image $Y$ such as moments is available.
This for instance arises with affine representations of random fields $\kappa$ in terms of Kosambi–Karhunen–Loève expansions
$$
\kappa(x,\omega) = \kappa_0(x) + \sum\limits_{i=1}^N Y_i(\omega) \kappa_i(x). 
$$
Here, the random vector $Y=(Y_1,\ldots,Y_N)$ has unknown distributions but is known to be centralized with uncorrelated $Y_i$ with unit variance, yielding a Stiefel manifold setting as in~\cite{soize2010identification}.
With this, the optimization problem~\eqref{eq:optprob} can easily be extended to a constrained formulation such that $\mathcal{M}[\theta]$ exhibits mean $0\in\mathbb{R}^N$ and covariance $I\in\mathbb{R}^{N,N}$.
In the performed numerical investigations, this constraint was introduced by penalty terms to circumvent difficulties that would arise when modifying the used STT format.
We believe that this type of additional information should improve the generalization error for small numbers of data samples for training in the model class.

Finally, the underlying debiased Sinkhorn divergence still is a dimension-dependent metric in terms of the sample complexity.
It would thus be worthwhile to extend our view to classes of dimension-free metrics such as the ones in the recent work~\cite{han2021class}.

\section*{Acknowledgment}
RG \& ME acknowledge support by the DFG MATH+ project AA5-5 (was EF1-25) -
\textit{Wasserstein Gradient Flows for Generalised Transport in Bayesian Inversion} and partial funding from the DFG SPP 1886 ``Polymorphic uncertainty modelling for the numerical design of structures''.
ME acknowledge partial funding from the DFG SPP 2298 ``Theoretical Foundations of Deep Learning'' and the ANR-DFG project ``COFNET: Compositional functions networks - adaptive learning for high-dimensional approximation and uncertainty quantification''.

\bibliographystyle{plain}
\bibliography{refs}

\end{document}